\documentclass[a4paper]{article}

\usepackage[T1]{fontenc} 
\usepackage[utf8]{inputenc}
\usepackage[english]{babel}
\usepackage{alphabeta}
\usepackage{authblk}
\usepackage[frozencache=true,cachedir=minted-cache]{minted}

\usepackage{amsmath, amsthm, amsfonts, amssymb}
\usepackage{enumerate}
\usepackage{hyperref}
\usepackage{graphicx}
\usepackage[all]{xy}
\usepackage{tikz-cd}
\usepackage{cancel}
\usepackage{orcidlink}
\usetikzlibrary{graphs,decorations.pathmorphing,decorations.markings, arrows.meta}

\newtheorem{thm}{Theorem}[section]
\newtheorem{cor}[thm]{Corollary}
\newtheorem{lem}[thm]{Lemma}
\newtheorem{prop}[thm]{Proposition}
\newtheorem{exam}[thm]{Example}
\theoremstyle{definition}
\newtheorem{defi}[thm]{Definition}
\theoremstyle{remark}
\newtheorem{rem}[thm]{Remark}
\newtheorem{nota}[thm]{Notation}

\newcommand*{\cat}[1]{\mathcal{#1}}
\newcommand*{\set}{\operatorname*{set}}
\newcommand*{\obj}[1]{#1_0}

\newcommand*{\Hom}[3]{\operatorname{Hom}_{\cat{#1}} \left(#2, #3\right)}

\title{The Burnside ring of simple $\cat{C}$-sets}

\author[1]{J. Miguel Calder\'on \orcidlink{0009-0007-4685-8837}}
\author[2]{Alberto G. Raggi-C\'ardenas \orcidlink{0000-0003-1720-1733}}
\author[3]{Itzel Rosas \orcidlink{0009-0002-3411-8456}}
\author[4]{Ram\'on H. Ruiz-Medina \orcidlink{0000-0003-2916-9160}}
\date{}

\affil[1]{Centro de Investigaci\'on en Matem\'aticas, A.C., Unidad M\'erida,\\
Parque Cient\'ifico y Tecnol\'ogico de Yucat\'an,\\
Carretera Sierra Papacal--Chuburna Puerto Km 5.5,\\
Sierra Papacal, M\'erida, YUC 97302,\\
M\'exico. \\ \texttt{calderonl@cimat.mx}}
\affil[2]{Centro de Ciencias Matem\'aticas, UNAM, Morelia, Michoac\'an 58089 \\ \texttt{agraggi@gmail.com}}
\affil[3]{PCCM, UNAM -- UMSNH, Morelia, Michoac\'an \\ \texttt{irosas@matmor.unam.mx}}
\affil[4]{Centro de Ciencias Matem\'aticas, UNAM, Morelia, Michoac\'an 58089 \\ \texttt{harath@matmor.unam.mx}}

\begin{document}
\maketitle


\begin{abstract}
The Burnside ring of a finite category, introduced by Webb, generalizes the classical Burnside ring of a finite group. However, unlike the classical case, the Burnside ring of a finite category has finite rank if and only if the category is equivalent to a groupoid.
In this article, we introduce a new invariant associated with a finite category $\cat{C}$, called the \emph{simple Burnside ring} of $\cat{C}$ and denoted by $B^S(\cat{C})$. This construction is obtained from simple $\cat{C}$-sets and generalizes the classical Burnside ring of a finite group. Moreover, the ring $B^S(\cat{C})$ always has finite rank.

We develop the basic theory of simple $\cat{C}$-sets and study several structural properties of the ring $B^S(\cat{C})$. In particular, we determine all ring homomorphisms from $B^S(\cat{C})$ to $\mathbb{Z}$, describe its prime spectrum, and obtain a decomposition theorem expressing $B^S(\cat{C})$ as a product of simple Burnside rings of strongly connected subcategories.
\end{abstract}

\section{Introduction}
The Burnside ring is one of the main tools in the study of representations of finite groups. It was introduced implicitly by Burnside in~\cite{burnside1909theory}, and later developed by Dress in~\cite{dress1969characterisation}. This ring is an algebraic invariant associated with a finite group and has been extensively studied by several mathematicians throughout history. Dress and  Schwänzl computed the spectrum of the Burnside ring in \cite{dress1969characterisation}  and ~\cite{spec-SCHW} respectively. while Bouc studied its idempotents and ideals in~\cite{bouc2010}. The relation between the Burnside ring and the ghost ring was studied in~\cite{dress1992application}. Bouc systematically developed and presented the classical theory of Burnside rings in \cite{bouc2000burnside}. Since then, the theory has been extended in several directions. For instance, the Burnside ring of a monoid was introduced in \cite{weissmann2025burnside}.

Recently,  Webb generalized the notion of the Burnside ring to finite categories in~\cite{Webb23}. Given a finite category $\cat{C}$, Webb considers the category whose objects are covariant functors
\[
F \colon \cat{C} \longrightarrow \mathbf{Set},
\]
called $\cat{C}$-sets, and whose morphisms are natural transformations between them. When $\cat{C}$ is the delooping category of a finite group $G$, these functors correspond precisely to finite $G$-sets. The Grothendieck group of this category is called the Burnside ring of the category $\cat{C}$ and is denoted by $B(\cat{C})$. In this case, the construction coincides with the classical Burnside ring of the finite group $G$.

The study of this generalized Burnside ring presents new opportunities and challenges.In~\cite{semisimple}, it was proved that the Burnside ring of a finite category has finite rank if and only if the category is equivalent to a groupoid.
In this article, we introduce a new invariant associated with a category $\cat{C}$, called the \emph{simple Burnside ring} of $\cat{C}$, denoted by $B^S(\cat{C})$. This construction is also a generalization of the Burnside ring of a finite group $G$, and moreover, it always has finite rank.

In Section \ref{Preliminares}, we introduce the basic definitions and preliminary results concerning the theory of $\cat{C}$-sets that will be needed throughout the article. In Section \ref{simple y semisimple}, we define the notion of a simple $\cat{C}$-set, namely a $\cat{C}$-set generated by any element of any of its evaluations. We also introduce the \emph{simple Burnside ring} of $\cat{C}$. In Section \ref{marks}, we define ring homomorphisms, called \emph{marks}, from $B^S(\cat{C})$ to $\mathbb{Z}$. These homomorphisms allow us to determine all ring homomorphisms from $B^S(\cat{C})$ to $\mathbb{Z}$. In Section \ref{spec}, we determine the spectrum of the ring $B^S(\cat{C})$. In Section \ref{descompositon}, we obtain a decomposition of the ring $B^S(\cat{C})$ as a product of simple Burnside rings of certain subcategories called \emph{strongly connected} subcategories. Finally, in Section \ref{strongly connected}, we study the simple Burnside ring of a strongly connected category.

\section{Preliminaries}\label{Preliminares}

Throughout this article, all categories are assumed to be finite. Let $\cat{C}$ be a category. We denote the set of objects of $\cat{C}$ by $\cat{C}_0$, a $\cat{C}$-set is a covariant functor from $\cat{C}$ to the category of finite sets, denoted by $\mathbf{set}$. This notion was introduced by  Webb in~\cite{Webb23}.
\begin{exam}
Let $\cat{C}$ be a category and let $x \in \cat{C}_0$. We define the $\cat{C}$-set $Hom_{\cat{C}}(x,-)$ as follows:
\[
    Hom_{\cat{C}}(x,-)(y) := Hom_{\cat{C}}(x,y), \quad \text{for all } y \in \cat{C}_0,
\]
and for every morphism $f \colon y \to z$ in $\cat{C}$, we define
\begin{align*}
     Hom_{\cat{C}}(x,-)(f) \colon Hom_{\cat{C}}(x,y) &\to Hom_{\cat{C}}(x,z)\\
     &g \mapsto f \circ g.
\end{align*}

\end{exam}

Given a $\cat{C}$-set $\Omega$, we say that a morphism $\alpha \in \Hom{C}{x}{y}$ acts on an element $a \in \Omega(x)$ via the evaluation of its image under $\Omega$, and, when the $\cat{C}$-set is clear in the context, we simply denote $\alpha \cdot a = \Omega(\alpha)(a)$.\\

Given a group $G$, the \emph{delooping} (or single-object) category $\cdot_G$ is the one with a single object, and whose morphisms and composition are given by the group elements and the group operation, respectively. A $\cdot_G$-set is equivalent to a group action of $G$ on the set image of the object under the functor.\\

The \emph{empty} $\cat{C}$-set is the functor $\emptyset$ that assigns to each object of $\cat{C}$ the empty set and to each morphism the empty function. A $\cat{C}$-subset of a $\cat{C}$-set $\Omega$ is a subfunctor of $\Omega$. In particular, the empty $\cat{C}$-set is a $\cat{C}$-subset of every $\cat{C}$-set.

A $\cat{C}$-set is called \emph{simple} if it has no proper $\cat{C}$-subsets; equivalently, its only $\cat{C}$-subsets are itself and the empty $\cat{C}$-set.

Let $\Omega$ and $\Lambda$ be $\cat{C}$-sets. Their disjoint union is defined as follows.

\begin{itemize}
    \item On objects: for every object $x\in\obj{C}$,
    \[
    (\Omega\sqcup\Lambda)(x)=\Omega(x)\sqcup\Lambda(x).
    \]

    \item On morphisms: for every morphism $\alpha\in\Hom{C}{x}{y}$,
    \[
    (\Omega\sqcup\Lambda)(\alpha)(t)=
    \begin{cases}
        \Omega(\alpha)(t), & \text{if } t\in\Omega(x),\\
        \Lambda(\alpha)(t), & \text{if } t\in\Lambda(x).
    \end{cases}
    \]
\end{itemize}

Assume that, for every morphism $\alpha\in\Hom{C}{x}{y}$,
\[
\Omega(\alpha)|_{\Omega(x)\cap\Lambda(x)}
=
\Lambda(\alpha)|_{\Omega(x)\cap\Lambda(x)}.
\]
Then the intersection $\Omega\cap\Lambda$ is well defined and is given as follows.

\begin{itemize}
    \item On objects: for every object $x\in\obj{C}$,
    \[
    (\Omega\cap\Lambda)(x)=\Omega(x)\cap\Lambda(x).
    \]

    \item On morphisms: for every morphism $\alpha\in\Hom{C}{x}{y}$,
    \[
    (\Omega\cap\Lambda)(\alpha)
    =
    \Omega(\alpha)|_{(\Omega\cap\Lambda)(x)}.
    \]
\end{itemize} 

\begin{rem}
   For any $\cat{C}$-sets $\Omega_1$ and $\Omega_2$, we have, 
$ 
\Omega_i \subseteq \Omega_1 \sqcup \Omega_2,
$ $i=1,2.
$ 
Moreover, if the intersection $\Omega_1 \cap \Omega_2$ is defined, then
$ \Omega_1 \cap \Omega_2 \subseteq \Omega_i,
$ $i=1,2.$
\end{rem}

Note that if $S_1$ and $S_2$ are distinct simple $\cat{C}$-subsets of a $\cat{C}$-set $\Omega$, then $S_1(x) \cap S_2(x) = \emptyset$ for every object $x \in \obj{C}$. This is a consequence of the fact that the intersection $S_1 \cap S_2$ would be a proper $\cat{C}$-subset of both $S_1$ and $S_2$, contradicting their simplicity. However, it is possible for two simple $\cat{C}$-subsets of any given $\cat{C}$-set to be isomorphic to each other.

A $\cat{C}$-set is said to be \emph{finite} if $\Omega(x)$ is a finite set for every object $x \in \obj{C}$.We say that a $\cat{C}$-set $\Omega$ is \textit{non-empty} if exists $c \in \obj{\cat{C}}$ such that $\Omega(c) \neq \emptyset$. 
\begin{defi}
	A non-empty $\cat{C}$-set $\Omega$ is said to be \textit{indecomposable} if it cannot be expressed properly as a disjoint union, that is, if $\Omega_1, \Omega_2$ are $\cat{C}$-sets such that $\Omega = \Omega_1 \sqcup \Omega_2$, then $\Omega_1 = \varnothing$ or $\Omega_2 = \varnothing$.
\end{defi}

The following is an important result which tells that every finite $\cat{C}$-set has a decomposition in indecomposable $\cat{C}$-sets, this result can be explored in \cite{Webb23}. 

\begin{lem}[Proposition 2.3 in \cite{Webb23}]
Let $\Omega$ be a $\cat{C}$-set. Then there exist indecomposable $\cat{C}$-sets $\Omega_1,\ldots,\Omega_k$ such that
\[
\Omega=\Omega_1\sqcup\Omega_2\sqcup\cdots\sqcup\Omega_k.
\]
\end{lem}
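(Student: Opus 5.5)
We argue by induction on the total size $N(\Omega)=\sum_{x\in\obj{C}}|\Omega(x)|$, which is a finite natural number because $\cat{C}$ has finitely many objects and $\Omega$ takes values in $\mathbf{set}$. If $\Omega=\varnothing$ we take $k=0$, the empty disjoint union. If $\Omega$ is non-empty and indecomposable we take $k=1$ and $\Omega_1=\Omega$. Otherwise $\Omega$ is non-empty and not indecomposable. By the Definition of indecomposable this means $\Omega=\Lambda_1\sqcup\Lambda_2$ with both $\Lambda_1$ and $\Lambda_2$ non-empty.

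In that case each $\Lambda_i$ is a $\cat{C}$-subset of $\Omega$ by the Remark, and $N(\Omega)=N(\Lambda_1)+N(\Lambda_2)$. Both summands are positive, so $N(\Lambda_i)<N(\Omega)$ for $i=1,2$. The induction hypothesis then writes each $\Lambda_i$ as a disjoint union of indecomposable $\cat{C}$-sets. Concatenating the two decompositions gives $\Omega=\Omega_1\sqcup\cdots\sqcup\Omega_k$, since disjoint union is associative: objectwise it is the disjoint union of sets, and on morphisms it acts piecewise.

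The only step that needs care is the meaning of ``$=$''. We interpret $\Omega=\Omega_1\sqcup\Omega_2$ as internal: for each $x$, $\Omega(x)$ is the disjoint union of the subsets $\Omega_1(x)$ and $\Omega_2(x)$, each $\Omega_i$ is a subfunctor, and $\Omega(\alpha)$ restricts to $\Omega_i(\alpha)$. Under this reading, associativity and the size count are immediate. If external disjoint unions are intended instead, the same argument works up to canonical isomorphism.

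A more conceptual alternative is to decompose $\Omega$ into its connected components. Define a relation on $\bigsqcup_x\Omega(x)$ by $a\sim b$ whenever $b=\alpha\cdot a$ for some morphism $\alpha$. Its equivalence closure has finitely many classes, and each class gives a subfunctor, because a class is closed under the action of every morphism. Each such subfunctor is indecomposable, since any splitting would have to separate elements linked by some $\alpha$. We would keep the induction as the main proof, because it avoids the need to check closure.
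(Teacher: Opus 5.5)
Your induction on $N(\Omega)=\sum_{x\in\cat{C}_0}|\Omega(x)|$ is a correct and complete proof of the statement. Three points are handled properly. The case $\Omega=\varnothing$ with $k=0$ is exactly the convention the statement needs, since the paper does not count the empty $\cat{C}$-set as indecomposable. Non-emptiness of both pieces gives $N(\Lambda_i)\geq 1$, so the size strictly decreases. And concatenating the two decompositions is harmless.

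The paper gives no argument of its own here; it only cites Webb. The natural comparison is therefore with the standard proof, which is the one you sketch as your alternative: take the classes of the equivalence relation on $\bigsqcup_x\Omega(x)$ generated by $a\sim\alpha\cdot a$, that is, the connected components of the category of elements of $\Omega$.

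Your induction is economical, using nothing beyond finiteness and the definition, but it proves existence only. The component argument costs a small closure check, but it gives a \emph{canonical} decomposition. Suppose $\Omega=\bigsqcup_i\Omega_i$ with each $\Omega_i$ an indecomposable $\cat{C}$-subset. Each $\Omega_i$ is closed under the generating relation in both directions, because its complement is also a $\cat{C}$-subset. Hence each $\Omega_i$ is a union of classes, and being indecomposable it is a single class.

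So the summands are uniquely determined as $\cat{C}$-subsets, and any isomorphism $\Omega\cong\Psi$ matches them up. This Krull--Schmidt-type uniqueness is what makes $B(\cat{C})$ free abelian on the isomorphism classes of indecomposable $\cat{C}$-sets. The paper uses that fact tacitly, for instance when it treats $B^S(\cat{C})$ as a subgroup of $B(\cat{C})$. The component argument also applies verbatim to $\cat{C}$-sets that are not finite, where induction on $N(\Omega)$ is unavailable.
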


It follows from the definitions that every simple $\cat{C}$-set is indecomposable. It holds that in groupoids, a $\cat{C}$-set is indecomposable if and only if it is transitive.
We denote by $\cat{C}$-set the category whose objects are $\cat{C}$-sets and whose morphisms are natural transformations between them. 
\begin{defi}[Definition 2.8 in \cite{Webb23} ]
	We define the \textit{Burnside ring} of the category $\cat{C}$, denoted by $B(\cat{C})$, as the Grothendieck group of the category $\cat{C}$-set with respect to the disjoint union of $\cat{C}$-sets.
\end{defi}

	We have a product operation in $\cat{C}$-set: given two $\cat{C}$-sets $\Omega, \Psi$, we define $(\Omega \times \Psi)(x) = \Omega(x) \times \Psi(x)$ for every object $x$ of $\cat{C}$, and $(\Omega \times \Psi)(\alpha) = (\Omega(\alpha), \Psi(\alpha))$ for every morphism $\alpha$ of $\cat{C}$. With this definition, the multiplication in $B(C)$ is given by $\times$ on the generators.
The functor $1 _\cat{C}\colon \cat{C} \to \set$ defined by $1_\cat{C}(x) = \{\, \cdot \,\}$ for all $x \in \cat{C}_0$ is the unit object of $B(\cat{C})$.

\begin{defi}
Let $\Omega$ be a $\cat{C}$-set. We define the \emph{order} of $\Omega$ by
\[
|\Omega| := \sum_{x\in\cat{C}_0} |\Omega(x)|,
\]
where $|\Omega(x)|$ denotes the cardinality of the set  $\Omega(x)$.
\end{defi}

\begin{prop}\label{mark}
Let $S$ be a  indecomposable  $\cat{C}$-set. The \emph{mark} of $S$ is the function
\begin{align*}
    \varphi_S \colon B(\cat{C}) &\longrightarrow \mathbb{Z},\\
    [\Omega] &\longmapsto |Hom_{\cat{C}\text{-set}}(S,\Omega)|.
\end{align*}
This function is a unital ring homomorphism.
\end{prop}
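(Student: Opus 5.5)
We need to show three things: that $\varphi_S$ is well defined on the Grothendieck group, that it is additive, and that it is multiplicative and unital. Since $B(\cat{C})$ is the Grothendieck group of the monoid of isomorphism classes of $\cat{C}$-sets under $\sqcup$, the universal property reduces everything to the following. The assignment $\Omega\mapsto |Hom_{\cat{C}\text{-set}}(S,\Omega)|$ must depend only on the isomorphism class of $\Omega$, and it must be a monoid homomorphism from $(\text{iso classes},\sqcup)$ to $(\mathbb{Z},+)$. Multiplicativity can then be checked on generators and extended bilinearly.

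\textbf{Step 1 (isomorphism invariance and finiteness).} Composition with an isomorphism $\Omega\cong\Omega'$ gives a bijection $Hom(S,\Omega)\to Hom(S,\Omega')$. To see that $Hom(S,\Omega)$ is finite, note that a natural transformation is a family of maps $S(x)\to\Omega(x)$ indexed by the finitely many objects, between finite sets.

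\textbf{Step 2 (additivity).} This is the main point, and it uses the indecomposability of $S$. We claim that
\[
Hom(S,\Omega_1\sqcup\Omega_2)=Hom(S,\Omega_1)\sqcup Hom(S,\Omega_2).
\]
Take a natural transformation $f\colon S\to\Omega_1\sqcup\Omega_2$. For each object $x$, set $S_i(x)=f_x^{-1}(\Omega_i(x))$ for $i=1,2$.

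Naturality shows that each $S_i$ is a $\cat{C}$-subset. Indeed, for $\alpha\colon x\to y$ and $a\in S_i(x)$ we have $f_y(\alpha\cdot a)=\alpha\cdot f_x(a)\in\Omega_i(y)$, because $\Omega_i$ is closed under the action.

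Hence $S=S_1\sqcup S_2$ as $\cat{C}$-sets. Since $S$ is indecomposable, one of $S_1,S_2$ is empty. So $f$ factors through exactly one of $\Omega_1,\Omega_2$; here "exactly one" uses that $S$ is non-empty. Conversely, each map into some $\Omega_i$ composed with the inclusion gives a map into the union. This proves the claim, and additivity follows. It also gives $\varphi_S(\emptyset)=0$, since $S$ is non-empty and so admits no map to the empty $\cat{C}$-set.

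\textbf{Step 3 (multiplicativity and unit).} The product $\Omega\times\Psi$ defined above is the categorical product in $\cat{C}$-set, because the product is computed objectwise. Therefore $Hom(S,\Omega\times\Psi)\cong Hom(S,\Omega)\times Hom(S,\Psi)$, which gives $\varphi_S([\Omega][\Psi])=\varphi_S([\Omega])\varphi_S([\Psi])$.

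For the unit, $1_{\cat{C}}$ is the terminal object, so $|Hom(S,1_{\cat{C}})|=1$.

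Since multiplication in $B(\cat{C})$ is the bilinear extension of $\times$ on generators, and $\varphi_S$ is additive, multiplicativity extends to all of $B(\cat{C})$. The only step needing real care is Step 2, namely verifying that the preimage decomposition is compatible with morphisms. This is immediate from naturality together with the fact that morphisms of the disjoint union preserve its summands.
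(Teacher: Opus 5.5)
Your proof is correct and follows essentially the same route as the paper: additivity from the decomposition $S = f^{-1}(\Omega_1)\sqcup f^{-1}(\Omega_2)$ together with indecomposability of $S$, multiplicativity from the universal property of the objectwise product, and the unit from $1_{\cat{C}}$ being terminal. You are slightly more explicit than the paper, for instance in checking that the preimages are $\cat{C}$-subsets, that non-emptiness of $S$ makes the two Hom-sets disjoint, and that multiplicativity extends bilinearly from generators.
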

\begin{proof}
Note that $1_{\cat{C}}(x)=\{ \cdot \}$ for all $x\in \cat{C}_0$. Hence,
\[
   \varphi_S(1_{\cat{C}})= |Hom_{\cat{C}\text{-set}}(S,1_{\cat{C}})| = 1,
\]
so $\varphi_S$ preserves the identity.

Let $R$ and $T$ be $\cat{C}$-sets. 
Let $R$ and $T$ be simple $\cat{C}$-sets. We first prove additivity. We claim that
\[
    Hom_{\cat{C}\text{-set}}(S, T \sqcup R) \cong Hom_{\cat{C}\text{-set}}(S,T) \sqcup Hom_{\cat{C}\text{-set}}(S,R).
\]

Indeed, let $\eta \colon S \to T \sqcup R$ be a natural transformation. Then
\[
    S = \eta^{-1}(T) \sqcup \eta^{-1}(R).
\]
Since $S$ is decomposable, either $\eta^{-1}(T) = \emptyset$ or $\eta^{-1}(R) = \emptyset$. In the first case, $\eta$ factors through $R$, and in the second case, it factors through $T$. This proves the claim. Hence,
\begin{align*}
    \varphi_S([T] + [R])
    &= |Hom_{\cat{C}\text{-set}}(S, T \sqcup R)| \\
    &= |Hom_{\cat{C}\text{-set}}(S,T)| + |Hom_{\cat{C}\text{-set}}(S,R)| \\
    &= \varphi_S([T]) + \varphi_S([R]).
\end{align*}

We now prove multiplicativity. By definition,
$[R]\cdot[T ]=[R\times T]$,  then
\begin{align*}
    \varphi_S([R]\cdot[T ])=\varphi_S([R\times T])=|Hom_{\cat{C}\text{-set}}(S,R\times T)|
\end{align*}
By the universal property of $\times$, we have $$Hom_{\cat{C}\text{-set}}(S,R\times T)\cong Hom_{\cat{C}\text{-set}}(S,R)\times Hom_{\cat{C}\text{-set}}(S,T).$$ Then $|Hom_{\cat{C}\text{-set}}(S,R\times T)|=|Hom_{\cat{C}\text{-set}}(S,R)|\cdot |Hom_{\cat{C}\text{-set}}(S,T)|=\varphi_S([R])\cdot \varphi_S([T]).$
    
\end{proof}

\section{Simple $\cat{C}$-sets}\label{simple y semisimple}
In this section, we study simple $\cat{C}$-sets, which can be viewed as a generalization of transitive $G$-sets (see~\cite{semisimple}). We also prove that there exist only finitely many simple $\cat{C}$-sets up to isomorphism.
\begin{defi}
Let $\Omega$ be a $\cat{C}$-set and let $x \in \cat{C}_0$. For an element $a \in \Omega(x)$, we define the $\cat{C}$-set \emph{generated by $a$ in $\Omega$}, denoted by $\cat{C}^\Omega \cdot a$, as follows:
\begin{itemize}
    \item On objects: for each $y \in \obj{C}$,
    \[
    (\cat{C}^\Omega \cdot a)(y)
    :=
    \{\alpha \cdot a \mid \alpha \in \Hom{C}{x}{y}\}.
    \]

    \item On morphisms: given $\beta \in \Hom{C}{y}{z}$,
    \[
    (\cat{C}^\Omega \cdot a)(\beta)
    :=
    \Omega(\beta)\big|_{(\cat{C}^\Omega \cdot a)(y)}.
    \]
\end{itemize}

Note that $\cat{C}^\Omega \cdot a$ is a $\cat{C}$-subset of $\Omega$.
\end{defi}

\begin{lem}[Theorem 4 in \cite{semisimple}]\label{orbita simple}
    Let $\Omega$ be a simple $\cat{C}$-set. Let $x\in\obj{C}$ be such that
$\Omega(x)\neq\varnothing$, and let $a\in\Omega(x)$. Then
\[
    \cat{C}^\Omega \cdot a = \Omega.
\]
\end{lem}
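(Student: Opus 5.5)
The plan is to show that $\cat{C}^\Omega \cdot a$ is a non-empty $\cat{C}$-subset of $\Omega$; simplicity of $\Omega$ then forces it to be all of $\Omega$. There are two checks: that $\cat{C}^\Omega \cdot a$ really is a subfunctor, and that it is non-empty.

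First, the subfunctor property, which the paper records after the definition but which we verify briefly. For each $y\in\obj{C}$ we have $(\cat{C}^\Omega \cdot a)(y)\subseteq \Omega(y)$ by construction, since $\alpha\cdot a=\Omega(\alpha)(a)\in\Omega(y)$ for every $\alpha\in\Hom{C}{x}{y}$. The key point is closure under the action. Take $\beta\in\Hom{C}{y}{z}$ and an element $\alpha\cdot a$ with $\alpha\in\Hom{C}{x}{y}$. Functoriality of $\Omega$ gives
\[
\Omega(\beta)(\Omega(\alpha)(a))=\Omega(\beta\circ\alpha)(a)=(\beta\circ\alpha)\cdot a .
\]
Since $\beta\circ\alpha\in\Hom{C}{x}{z}$, this element lies in $(\cat{C}^\Omega \cdot a)(z)$. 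So the restriction $\Omega(\beta)\big|_{(\cat{C}^\Omega \cdot a)(y)}$ lands in $(\cat{C}^\Omega \cdot a)(z)$. Identities and composition are inherited from $\Omega$, because the maps are restrictions of the maps of $\Omega$. Hence $\cat{C}^\Omega \cdot a$ is a $\cat{C}$-subset of $\Omega$.

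Second, non-emptiness. Taking $y=x$ and $\alpha=\mathrm{id}_x$ gives $a=\mathrm{id}_x\cdot a\in(\cat{C}^\Omega \cdot a)(x)$, so $\cat{C}^\Omega \cdot a\neq\emptyset$.

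By the definition of simple, the only $\cat{C}$-subsets of $\Omega$ are $\emptyset$ and $\Omega$. Since $\cat{C}^\Omega \cdot a$ is a non-empty $\cat{C}$-subset, it must equal $\Omega$. No step is a real obstacle; the only one needing care is the closure computation, which rests entirely on functoriality, $\Omega(\beta\circ\alpha)=\Omega(\beta)\circ\Omega(\alpha)$.
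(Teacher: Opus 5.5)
Your proof is correct and complete: closure of $\cat{C}^\Omega\cdot a$ under the action follows from functoriality, it contains $a=\Omega(\mathrm{id}_x)(a)$, and simplicity then forces $\cat{C}^\Omega\cdot a=\Omega$. The paper gives no proof of its own; it cites Theorem 4 of the external reference and notes after the definition that $\cat{C}^\Omega\cdot a$ is a $\cat{C}$-subset of $\Omega$, and your argument is the standard direct proof that fills this in.
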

\begin{defi}
Let $\cat{C}^{Sim}$-set denote the full subcategory of simple $\cat{C}$-sets. The Grothendieck group $K_0(\cat{C}^{Sim}$-set, $\sqcup$), with addition induced by disjoint union, is denoted by $B^S(\cat{C})$ and is called the \emph{simple Burnside group} of $\cat{C}$.
\end{defi}

By construction, $(B^S(\cat{C}),+)$ is an abelian subgroup of $(B(\cat{C}),+)$. Since $B(\cat{C})$ decomposes as a direct sum of the Burnside rings of the connected subcategories of $\cat{C}$, the group $B^S(\cat{C})$ also decomposes as a direct sum of the simple Burnside rings of the connected subcategories. Consequently, from now on all categories considered will be finite and connected.
An interesting property of the product in $B(\cat{C})$ is that the product of classes of simple or semisimple $\cat{C}$-sets need not be simple or semisimple. This phenomenon is illustrated in the following example.

\begin{exam}
    Consider a monoid $M = \{e, \alpha, \beta\}$, whose operation is described in the following Cayley table:
    \[
        \begin{array}{c|cccc}
        \cdot & e & \alpha & \beta \\
        \hline
        e & e & \alpha & \beta \\
        \alpha & \alpha & \alpha & \alpha \\
        \beta & \beta & \beta & \beta 
        \end{array}
    \]

Let $\cat{C}$ be the delooping category of this monoid. We define a $\cat{C}$-set $\Omega$ by
\[
\Omega(\cdot) = \{a,b\},
\]
with action given by
\[
\begin{array}{ccc}
e \cdot a = a, & \alpha \cdot a = a, & \beta \cdot a = b, \\
e \cdot b = b, & \alpha \cdot b = a, & \beta \cdot b = b.
\end{array}
\]
A straightforward verification shows that $\Omega$ is a well-defined $\cat{C}$-set and that it is simple.

On the other hand, consider the $\cat{C}$-set $\Omega \times \Omega$, for which
\[
(\Omega \times \Omega)(\cdot) = \{(a,a), (a,b), (b,a), (b,b)\}.
\]
Note that the $\cat{C}$-set $\Lambda$, defined by assigning the subset $\{(a,a),(b,b)\}$, is a $\cat{C}$-subset of $\Omega \times \Omega$. This shows that $\Omega \times \Omega$ is not a simple $\cat{C}$-set.
\end{exam}
Let $\cat{C}$ be a finite connected category, and let $\Omega$ be a $\cat{C}$-set. We define the support of $\Omega$ by
\begin{align*}
    supp(\Omega) = \{\, x \in \cat{C}_0 \mid \Omega(x) \neq \emptyset \,\}.
\end{align*}

\begin{rem}
Let $\Omega$ and $\Psi$ be $\cat{C}$-sets. If there exists a natural transformation
$ 
\eta:\Omega\longrightarrow \Psi,
$ 
then $
\operatorname{Supp}(\Omega)\subseteq \operatorname{Supp}(\Psi).
$

\end{rem}

\begin{prop}
    Let $\Omega$ be a $\cat{C}$-set and let $S$ be a simple $\cat{C}$-set. If $\eta \colon \Omega \to S$ is a nonempty natural transformation. Then $\eta_x$ is surjetive for all $x\in \cat{C}_0$. 
\end{prop}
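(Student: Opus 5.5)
The plan is to show that the image of $\eta$ is a nonempty $\cat{C}$-subset of $S$. Simplicity of $S$ then forces the image to be all of $S$, which is exactly surjectivity of every component.

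First we define the image functor $\operatorname{Im}(\eta)$ by $\operatorname{Im}(\eta)(y) := \eta_y(\Omega(y)) \subseteq S(y)$ for each $y \in \obj{C}$. For $\beta \in \Hom{C}{y}{z}$ we let it act by the restriction of $S(\beta)$.

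The one point to check is that $S(\beta)$ maps $\eta_y(\Omega(y))$ into $\eta_z(\Omega(z))$. This follows from naturality: if $s = \eta_y(t)$ with $t \in \Omega(y)$, then
\[
S(\beta)(s) = S(\beta)(\eta_y(t)) = \eta_z(\Omega(\beta)(t)) \in \eta_z(\Omega(z)).
\]
Functoriality of $\operatorname{Im}(\eta)$ is inherited from $S$, so $\operatorname{Im}(\eta)$ is a $\cat{C}$-subset of $S$.

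Next we use the hypothesis that $\eta$ is nonempty. We read this as saying that $\Omega(c) \neq \emptyset$ for some $c \in \obj{C}$; if $\Omega$ were empty, then $\eta$ would be the empty transformation. Pick $t \in \Omega(c)$. Then $\eta_c(t) \in \operatorname{Im}(\eta)(c)$, so $\operatorname{Im}(\eta)$ is not the empty $\cat{C}$-set. Since $S$ is simple, its only $\cat{C}$-subsets are $\emptyset$ and $S$, hence $\operatorname{Im}(\eta) = S$. This means $\eta_x(\Omega(x)) = S(x)$ for every $x \in \obj{C}$, i.e.\ each $\eta_x$ is surjective.

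Alternatively, one can invoke Lemma~\ref{orbita simple}: with $a = \eta_c(t)$ we have $S = \cat{C}^S \cdot a$. Any element of $S(y)$ then has the form $\alpha \cdot a = \eta_y(\Omega(\alpha)(t))$ for some $\alpha \in \Hom{C}{c}{y}$, which again lies in the image of $\eta_y$.

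The argument is routine; there is no real obstacle. The only places needing care are interpreting ``nonempty natural transformation'' as one with nonempty domain, and the naturality check that the image is closed under the action.
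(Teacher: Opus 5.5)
Your proof is correct, and your alternative argument is exactly the paper's: it fixes $a\in\Omega(x)$, uses Lemma~\ref{orbita simple} to write any $b\in S(y)$ as $S(\beta)(\eta_x(a))$ for some $\beta\in\Hom{C}{x}{y}$, and concludes $b=\eta_y(\Omega(\beta)(a))$ by naturality. Your main argument via the image $\operatorname{Im}(\eta)$ is the same idea applied directly to the definition of simplicity. It therefore avoids citing Lemma~\ref{orbita simple}, and avoids the paper's preliminary reduction to $y\in\operatorname{Supp}(S)$, at no extra cost.
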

\begin{proof}
Note that it suffices to prove that $\eta_y$ is surjective for every $y\in \operatorname{Supp}(S)$. Since $\eta$ is nonempty, there exist an object $x\in\cat{C}_0$ and an element $a\in\Omega(x)$ such that $\eta_x(a)\in S(x)$. Let $y\in Supp(S)$. 
 Let $b\in S(y)$. Since $S$ is a simple $\cat{C}$-set, there exists a morphism $\beta\in Hom_{\cat{C}}(x,y)$ such that $ 
S(\beta)(\eta_x(a)) = b.
$
By the naturality of $\eta$, we have
\[
b = S(\beta)(\eta_x(a)) = \eta_y(\Omega(\beta)(a)).
\]
Thus, $\eta_y$ is surjective for every object $y\in\cat{C}_0$. 
\end{proof}

\begin{prop}\label{| | <| |}
Let $\Omega$ be a $\cat{C}$-set and let $S$ be a simple $\cat{C}$-set. If $\eta \colon \Omega \to S$ is a natural transformation, then $|\Omega| \geq |S|$.
\end{prop}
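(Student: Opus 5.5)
The plan is to reduce the inequality to the previous proposition, which says that a nonempty natural transformation into a simple $\cat{C}$-set is surjective in every component. Then I would sum cardinalities over the objects of $\cat{C}$.

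First I would dispose of the degenerate case. If $\eta$ is the empty transformation, then $\Omega=\emptyset$, since the only map into a set from $\Omega(x)$ when $\Omega(x)\neq\emptyset$ has nonempty image. Here the statement needs a word of interpretation. If $S$ is nonempty, there is no natural transformation $\Omega\to S$ with $\Omega=\emptyset$ unless we accept the empty one, and then the inequality $|\Omega|\geq|S|$ fails. So I would read the hypothesis as in the preceding proposition: $\eta$ nonempty, or equivalently $\Omega$ nonempty. If $S=\emptyset$ the inequality is trivial, since $|S|=0$. I expect this bookkeeping about the empty case to be the only real subtlety, and I would state the implicit nonemptiness assumption explicitly.

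Assuming $\Omega\neq\emptyset$, pick $x$ and $a\in\Omega(x)$. Then $\eta_x(a)\in S(x)$, so $\eta$ is a nonempty natural transformation. By the previous proposition, each component $\eta_y\colon\Omega(y)\to S(y)$ is surjective for every $y\in\cat{C}_0$. Surjectivity between finite sets gives $|\Omega(y)|\geq|S(y)|$ for each $y$. Summing over the finitely many objects then gives
\[
|\Omega|=\sum_{y\in\cat{C}_0}|\Omega(y)|\geq\sum_{y\in\cat{C}_0}|S(y)|=|S|,
\]
which is the claim.
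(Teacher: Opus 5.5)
Your proposal is correct and follows the paper's proof: apply the preceding proposition to get surjectivity of every component $\eta_y$ when $\eta$ is nonempty, then sum the inequalities $|\Omega(y)|\geq|S(y)|$ over the objects. Your caveat about the empty case is justified. The paper's proof also only treats $\eta\neq\emptyset$, and the statement as literally written fails for $\Omega=\emptyset\neq S$, because the empty transformation $\emptyset\to S$ always exists.
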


\begin{proof}
Let $\eta \colon \Omega \to S$ be a natural transformation. If $\eta \neq \emptyset$, then for every $x \in \cat{C}_0$, the map $\eta_x \colon \Omega(x) \to S(x)$ is surjective. Hence $|\Omega(x)| \geq |S(x)|$ for all $x \in \cat{C}_0$, and therefore $|\Omega| \geq |S|$.
\end{proof}

\begin{cor}\label{| | =| |}
  Let $\Omega$ and $S$ be simple $\mathcal{C}$-sets such that $ |\Omega |=|S|$. If $\eta \colon \Omega \to S$ is a natural transformation, then $\eta$ is an isomorphism.
\end{cor}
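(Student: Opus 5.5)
Since $\eta$ goes into the simple $\cat{C}$-set $S$, the plan is to show that every component $\eta_x$ is a bijection between finite sets of equal size. A natural transformation all of whose components are bijections is an isomorphism, because the componentwise inverses are automatically natural.

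\textbf{Step 1: $\eta$ is nonempty.} A simple $\cat{C}$-set is understood to be non-empty, since the empty $\cat{C}$-set has no proper $\cat{C}$-subsets but is not counted among the simple ones. So there is $x\in\cat{C}_0$ with $\Omega(x)\neq\varnothing$, and $\eta_x$ sends any $a\in\Omega(x)$ to an element of $S(x)$. Hence $\eta$ is a nonempty natural transformation.

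\textbf{Step 2: surjectivity and equal cardinalities.} By the preceding proposition on nonempty natural transformations into a simple $\cat{C}$-set, every component $\eta_y\colon\Omega(y)\to S(y)$ is surjective. Therefore $|\Omega(y)|\geq |S(y)|$ for every $y\in\cat{C}_0$. Summing over the finitely many objects gives
\[
|\Omega|=\sum_{y\in\cat{C}_0}|\Omega(y)|\;\geq\;\sum_{y\in\cat{C}_0}|S(y)|=|S|.
\]
The hypothesis $|\Omega|=|S|$ forces equality here. A sum of nonnegative differences $|\Omega(y)|-|S(y)|$ can vanish only if each term vanishes, so $|\Omega(y)|=|S(y)|$ for every object $y$. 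This is the only real step of the proof, and it is straightforward: passing from the global equality $|\Omega|=|S|$ to equality at each object uses the componentwise inequalities just obtained.

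\textbf{Step 3: bijectivity and the inverse.} Each $\eta_y$ is a surjection between finite sets of the same cardinality, so it is a bijection. Define $\eta^{-1}$ componentwise by $(\eta^{-1})_y=\eta_y^{-1}$. For $\beta\in\Hom{C}{y}{z}$, naturality of $\eta$ gives $\eta_z\Omega(\beta)=S(\beta)\eta_y$. Composing on the left with $\eta_z^{-1}$ and on the right with $\eta_y^{-1}$ yields
\[
\Omega(\beta)\,\eta_y^{-1}=\eta_z^{-1}\,S(\beta),
\]
which is exactly naturality of $\eta^{-1}$. Hence $\eta^{-1}$ is a natural transformation inverse to $\eta$, and $\eta$ is an isomorphism. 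Simplicity of $\Omega$ is not needed in this argument; only simplicity of $S$ and finiteness of the sets are used.
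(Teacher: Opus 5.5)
Your proof is correct and follows essentially the paper's argument: each $\eta_x$ is surjective by the earlier proposition, the hypothesis $|\Omega|=|S|$ forces $|\Omega(x)|=|S(x)|$ at every object, and so every $\eta_x$ is a bijection. You also make explicit two points the paper leaves implicit: that $\eta$ is nonempty (which, as your closing remark suggests, already follows from $|\Omega|=|S|\geq 1$ without using simplicity of $\Omega$), and that the componentwise inverse is natural.
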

\begin{proof}
By Proposition~\ref{| | <| |}, we have $|S(x)| \leq |\Omega(x)| $
for every $x \in \mathcal{C}_0$. Since $|\Omega| = |S|$, it follows that $|S(x)| = |\Omega(x)| $
for every $x \in \mathcal{C}_0$. Therefore, each map $\eta_x \colon \Omega(x) \to S(x)$
is a bijection. Hence, $\eta$ is a natural isomorphism.
\end{proof}

\begin{cor}
Let $S$ be a simple $\cat{C}$-set, let $x \in supp(S)$, and let $a \in S(x)$. Consider the natural transformation $  \eta \colon Hom_{\cat{C}}(x, -) \longrightarrow S$, defined by
\begin{align*}
    \eta_y \colon Hom_{\cat{C}}(x, y) &\longrightarrow S(y), \\
    \alpha &\longmapsto S(\alpha)(a).
\end{align*}
Then $\eta_y$ is surjective for all $y \in \cat{C}_0$.
\end{cor}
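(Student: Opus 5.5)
The plan is to reduce the statement to the preceding proposition, which says that a nonempty natural transformation into a simple $\cat{C}$-set is surjective at every object. Two things are needed: that $\eta$ is a natural transformation, and that it is nonempty.

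First I check that $\eta$ is well defined and natural. For $\beta \in \Hom{C}{y}{z}$ and $\alpha \in \Hom{C}{x}{y}$, functoriality of $S$ gives
\[
\eta_z(\beta\circ\alpha) = S(\beta\circ\alpha)(a) = S(\beta)\bigl(S(\alpha)(a)\bigr) = S(\beta)\bigl(\eta_y(\alpha)\bigr).
\]
The left-hand side is $\eta_z\bigl(Hom_{\cat{C}}(x,-)(\beta)(\alpha)\bigr)$, so the naturality square commutes.

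Next, $\eta$ is nonempty, since $\eta_x(\mathrm{id}_x) = S(\mathrm{id}_x)(a) = a \in S(x)$. The preceding proposition then says $\eta_y$ is surjective for every $y \in \cat{C}_0$.

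As a cross-check, surjectivity can also be seen directly from Lemma~\ref{orbita simple}. That lemma gives $S = \cat{C}^S\cdot a$, so every $b \in S(y)$ has the form $\alpha\cdot a = \eta_y(\alpha)$ for some $\alpha \in \Hom{C}{x}{y}$. For $y \notin supp(S)$ the codomain $S(y)$ is empty, so surjectivity is vacuous there.

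There is no real obstacle. The only points needing care are the naturality computation and checking nonemptiness via the identity morphism; I will present the short direct argument via Lemma~\ref{orbita simple}.
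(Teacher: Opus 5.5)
Your proposal is correct and matches the paper. The paper gives no separate proof and treats this as an immediate corollary of the preceding proposition (a nonempty natural transformation into a simple $\cat{C}$-set is surjective at every object), with nonemptiness coming from $\eta_x(\mathrm{id}_x)=a$, exactly as you note. Your direct argument via Lemma~\ref{orbita simple} is just that proposition's proof specialized to $\Omega = Hom_{\cat{C}}(x,-)$, so either presentation works.
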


\begin{defi}
Let $\Omega$ be a $\cat{C}$-set. A family of relations 
\[
    \sim := \{\, \sim_x \,\}_{x \in \cat{C}_0}
\]
is called a \emph{congruence} on $\Omega$ if, for each $x \in \cat{C}_0$, the relation $\sim_x$ is an equivalence relation on $\Omega(x)$, and for every morphism $\alpha \in Hom_{\cat{C}}(x,y)$ and all $a,b \in \Omega(x)$ with $a \sim_x b$, one has
\[
    \Omega(\alpha)(a) \sim_y \Omega(\alpha)(b).
\]
\end{defi}

\begin{defi}[The quotient $\Omega/\sim$]
Let $\Omega$ be a $\cat{C}$-set and let $\sim = \{\, \sim_x \,\}_{x \in \cat{C}_0}$ be a congruence on $\Omega$. We define the quotient $\Omega/\sim$ as follows:
\begin{itemize}
    \item On objects:
    \[
        (\Omega/\sim)(x) := \Omega(x)/\sim_x, \quad \text{for all } x \in \cat{C}_0.
    \]
    
    \item On morphisms: for every morphism $\alpha \colon x \to y$ in $\cat{C}$,
    \[
        (\Omega/\sim)(\alpha) \colon \Omega(x)/\sim_x \longrightarrow \Omega(y)/\sim_y, 
        \quad [a] \longmapsto [\Omega(\alpha)(a)].
    \]
\end{itemize}
This defines a $\cat{C}$-set. The canonical projection $\pi_{\sim} \colon \Omega \to \Omega/\sim$ is a natural transformation.
\end{defi}
\begin{thm}[The universal property of the quotient]
Let $\Omega$ and $\Psi$ be $\cat{C}$-sets, let $\eta \colon \Omega \to \Psi$ be a natural transformation, and let $\sim = \{\, \sim_x \,\}_{x \in \cat{C}_0}$ be a congruence on $\Omega$. Then the quotient and the canonical morphism $\pi_{\sim} \colon \Omega \to \Omega/\sim$ satisfy the following universal property:
there exists a unique natural transformation
\[
    \overline{\eta} \colon \Omega/\sim \longrightarrow \Psi
\]
such that
\[
    \eta = \overline{\eta} \circ \pi_{\sim}
\]
if and only if, for all $x \in \obj{C}$ and all $a,b \in \Omega(x)$,
\[
    a \sim_x b \;\Rightarrow\; \eta_x(a) = \eta_x(b).
\]
\end{thm}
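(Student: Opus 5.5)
The plan is to prove the two implications separately, working objectwise and then checking naturality.

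For necessity, suppose $\overline{\eta}$ exists with $\eta = \overline{\eta}\circ\pi_{\sim}$. If $a\sim_x b$, then $\pi_{\sim,x}(a)=[a]=[b]=\pi_{\sim,x}(b)$. Hence
\[
\eta_x(a)=\overline{\eta}_x([a])=\overline{\eta}_x([b])=\eta_x(b),
\]
and this direction is immediate.

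For sufficiency, assume $a\sim_x b$ implies $\eta_x(a)=\eta_x(b)$. For each $x\in\obj{C}$, define
\[
\overline{\eta}_x\colon \Omega(x)/\sim_x\to\Psi(x), \qquad [a]\mapsto \eta_x(a).
\]
The hypothesis says exactly that $\eta_x$ is constant on $\sim_x$-classes, so $\overline{\eta}_x$ is well defined. By construction $\overline{\eta}_x\circ\pi_{\sim,x}=\eta_x$.

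Naturality is checked on representatives. For $\alpha\in\Hom{C}{x}{y}$ and $[a]\in\Omega(x)/\sim_x$, we have $(\Omega/\sim)(\alpha)([a])=[\Omega(\alpha)(a)]$, which is well defined because $\sim$ is a congruence. Then
\[
\overline{\eta}_y\bigl((\Omega/\sim)(\alpha)([a])\bigr)=\eta_y(\Omega(\alpha)(a))=\Psi(\alpha)(\eta_x(a))=\Psi(\alpha)(\overline{\eta}_x([a])),
\]
where the middle equality is the naturality of $\eta$.

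For uniqueness, each $\pi_{\sim,x}$ is surjective. So if $\overline{\eta}\circ\pi_{\sim}=\eta=\overline{\eta}'\circ\pi_{\sim}$, then $\overline{\eta}_x$ and $\overline{\eta}'_x$ agree on every class $[a]$, and hence $\overline{\eta}=\overline{\eta}'$.

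The only step that needs any care is the well-definedness in the sufficiency direction, both of $\overline{\eta}_x$ and of the quotient action. These are handled by the hypothesis and by the congruence condition respectively, so I do not expect a genuine obstacle.
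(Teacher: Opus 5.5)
Your proof is correct and follows the same route as the paper. Necessity comes from $[a]=[b]$, sufficiency from defining $\overline{\eta}_x([a]) := \eta_x(a)$, and uniqueness from the surjectivity of each component of $\pi_{\sim}$. You also explicitly check that $\overline{\eta}$ is natural, a step the paper's proof omits, so your version is slightly more complete.
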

This can be interpreted as if the following diagram commutes. 
$$\xymatrix{\Omega \ar[rr]^{\pi_{\sim}}\ar[d]_{\eta} & & \Omega/\sim  \ar[lld]^{\exists ! \overline{\eta}}  \\
\Psi} $$

\begin{proof}
First, suppose that there exists a natural transformation $ \overline{\eta} \colon \Omega/\sim \longrightarrow \Psi$ such that $\eta = \overline{\eta} \circ \pi_{\sim}$. Let $x \in \obj{C}$ and let $a,b \in \Omega(x)$ be such that $a \sim_x b$. Then
\[
    {\pi_{\sim}}_x(a) = [a] = [b] = {\pi_{\sim}}_x(b).
\]
By hypothesis, $\eta_x = \overline{\eta}_x \circ {\pi_{\sim}}_x$, hence
\[
    \eta_x(a) = \overline{\eta}_x({\pi_{\sim}}_x(a)) = \overline{\eta}_x({\pi_{\sim}}_x(b)) = \eta_x(b).
\]

Conversely, define a natural transformation $\overline{\eta} \colon \Omega/\sim \to \Psi$ by
\[
    \overline{\eta}_x([a]) := \eta_x(a), \quad \text{for all } x \in \obj{C},\; [a] \in \Omega(x)/\sim_x.
\]
This map is well defined: if $[a] = [b]$, then $a \sim_x b$, and hence $\eta_x(a) = \eta_x(b)$, so
\[
    \overline{\eta}_x([a]) = \overline{\eta}_x([b]).
\]

By construction, $\eta = \overline{\eta} \circ \pi_{\sim}$. 

To prove uniqueness, let $\kappa \colon \Omega/\sim \to \Psi$ be another natural transformation such that $\eta = \kappa \circ \pi_{\sim}$. Then, for all $x \in \obj{C}$ and $a \in \Omega(x)$,
\[
    \overline{\eta}_x({\pi_{\sim}}_x(a)) = \eta_x(a) = \kappa_x({\pi_{\sim}}_x(a)).
\]
Since ${\pi_{\sim}}_x$ is surjective, it follows that $\overline{\eta}_x = \kappa_x$, and hence $\overline{\eta} = \kappa$.
\end{proof}

Let $S$ be a simple $\cat{C}$-set, let $x \in \cat{C}_0$, and let $a \in S(x)$. For any $y \in \cat{C}_0$, we define an equivalence relation $\sim_y^S$ on $\mathrm{Hom}_{\cat{C}}(x,y)$ as follows: for $\alpha, \beta \in \mathrm{Hom}_{\cat{C}}(x,y)$, we declare
\[
\alpha \sim_y^S \beta \quad \text{if and only if} \quad S(\alpha)(a) = S(\beta)(a).
\]
Note that for every $f \in \mathrm{Hom}_{\cat{C}}(y,z)$, if $\alpha \sim_y^S \beta$ in $\mathrm{Hom}_{\cat{C}}(x,y)$, then
$ f \circ \alpha \sim_z^S f \circ \beta$ 
in $\mathrm{Hom}_{\cat{C}}(x,z)$.
By the previous observation, the relation $\sim^S := \{\sim_y^S\}_{y \in \cat{C}_0}$ is a congruence on $\mathrm{Hom}_{\cat{C}}(x,-)$. Therefore, the following functor is well defined.

\begin{defi}
Let $S$ be a simple $\cat{C}$-set, let $x\in \operatorname{Supp}(S)$, and let $a\in S(x)$. We define the $\cat{C}$-set $\Omega_S^x$ by
\[
\Omega^x_S(y) := \mathrm{Hom}_{\cat{C}}(x,y)/\sim_y^S,
\quad \text{for each } y \in \cat{C}_0.
\]
\end{defi}
Moreover, the natural morphism of $\cat{C}$-sets
$ 
\eta \colon \mathrm{Hom}_{\cat{C}}(x,-)/\sim^S \longrightarrow S
$
is defined on each component $y \in \cat{C}_0$ by
\begin{align*}
    \eta_y \colon Hom_{\cat{C}}(x, y)/\sim^S_y &\longrightarrow S(y), \\
    [\alpha] &\longmapsto S(\alpha)(a),
\end{align*}
Moreover, $\eta_y$ is a bijection for all $y \in \cat{C}_0$.

\begin{thm} \label{simple ang quotinet}
 The functor \(\Omega_S^x := \mathrm{Hom}_{\mathcal{C}}(x,{-})/\sim\) is simple if and only if the following holds:
1. For all \(f \in \mathrm{Hom}_{\mathcal{C}}(y,z)\), whenever \(\alpha \sim_y \beta\) in \(\mathrm{Hom}_{\mathcal{C}}(x,y)\), then
\[
f \circ \alpha \sim_z f \circ \beta \quad \text{in } \mathrm{Hom}_{\mathcal{C}}(x,z).
\]

2. For all \([\alpha] \in \Omega_S^x(y)\) and \([\beta] \in \Omega_S^x(z)\), there exists \(l \in \mathrm{Hom}_{\mathcal{C}}(y,z)\) such that
\[
[l \circ \alpha] = [\beta].
\]

\end{thm}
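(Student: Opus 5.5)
Here $\sim=\{\sim_y\}_{y\in\cat{C}_0}$ is an arbitrary family of equivalence relations on the sets $\mathrm{Hom}_{\cat{C}}(x,y)$, and $\Omega_S^x$ denotes $\mathrm{Hom}_{\cat{C}}(x,-)/\sim$. Condition 1 says exactly that $\sim$ is a congruence on $\mathrm{Hom}_{\cat{C}}(x,-)$, so that the quotient is a well-defined $\cat{C}$-set. Condition 2 is a transitivity condition among the classes. The plan is to prove the two implications separately. Throughout, I read the first part of ``$\Omega_S^x$ is simple'' as including the requirement that $\Omega_S^x$ be a well-defined $\cat{C}$-set, i.e.\ that $[\alpha]\mapsto[f\circ\alpha]$ does not depend on the representative.

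For $(\Leftarrow)$: condition 1 guarantees, by the definition of the quotient of a $\cat{C}$-set by a congruence, that $\Omega_S^x$ is a $\cat{C}$-set. It is non-empty because $[1_x]\in\Omega_S^x(x)$. Now let $\Lambda\subseteq\Omega_S^x$ be a non-empty $\cat{C}$-subset, and pick $y$ and $[\alpha]\in\Lambda(y)$. For any $z$ and any $[\beta]\in\Omega_S^x(z)$, condition 2 gives $l\in\mathrm{Hom}_{\cat{C}}(y,z)$ with
\[
[\beta]=[l\circ\alpha]=\Omega_S^x(l)([\alpha]).
\]
This element lies in $\Lambda(z)$ because $\Lambda$ is closed under the action. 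Hence $\Lambda=\Omega_S^x$, so the quotient is simple.

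For $(\Rightarrow)$: if $\Omega_S^x$ is simple, it is in particular a $\cat{C}$-set, so its morphism maps $[\alpha]\mapsto[f\circ\alpha]$ are well defined; this is condition 1. For condition 2, take $[\alpha]\in\Omega_S^x(y)$. By Lemma~\ref{orbita simple}, simplicity gives $\cat{C}^{\Omega_S^x}\cdot[\alpha]=\Omega_S^x$. Evaluating at $z$, every $[\beta]\in\Omega_S^x(z)$ has the form $\Omega_S^x(l)([\alpha])=[l\circ\alpha]$ for some $l\in\mathrm{Hom}_{\cat{C}}(y,z)$, which is condition 2.

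The only real obstacle is interpretive rather than mathematical: condition 1 only makes sense as a precondition, so its necessity reduces to the observation that simplicity presupposes well-definedness. I would state this convention explicitly at the start of the proof. If instead $\sim$ is taken to be $\sim^S$, condition 1 holds automatically by the note preceding the definition. In that case I would add a remark that, through the bijection $\eta$ with $S$, condition 2 follows directly from the simplicity of $S$.
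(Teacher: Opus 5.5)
Your proposal is correct and takes the same route as the paper. The paper's one-line proof simply notes that condition 2 is equivalent to $\Omega_S^x$ consisting of a single orbit, with condition 1 tacitly serving as the congruence condition that makes the quotient well defined. You have written out both directions of that equivalence explicitly.
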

\begin{proof}
The result follows from the fact that the second condition is equivalent to $\Omega_S^x$ consisting of a single orbit.
\end{proof}
\begin{cor}
There exist finitely many isomorphism classes of simple $\cat{C}$-sets.
\end{cor}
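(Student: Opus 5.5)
The plan is to show that every simple $\cat{C}$-set is isomorphic to a quotient of a representable functor $Hom_{\cat{C}}(x,-)$ by a congruence, and then count. Since $\cat{C}$ is finite, there are only finitely many such quotients.

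First, fix a non-empty simple $\cat{C}$-set $S$. (The empty $\cat{C}$-set contributes at most one further class, depending on convention.) Choose $x\in supp(S)$ and $a\in S(x)$. Consider the natural transformation
\[
\eta\colon Hom_{\cat{C}}(x,-)\longrightarrow S,\qquad \eta_y(\alpha)=S(\alpha)(a).
\]
By Lemma~\ref{orbita simple}, $\cat{C}^S\cdot a=S$, so every $\eta_y$ is surjective; this is also the corollary preceding the definition of congruences. By construction, the congruence $\sim^S$ on $Hom_{\cat{C}}(x,-)$ is exactly the kernel relation of $\eta$. The universal property of the quotient therefore gives a factorization $\overline{\eta}\colon \Omega^x_S\to S$. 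Each $\overline{\eta}_y$ is surjective because $\eta_y$ is, and injective because $[\alpha]=[\beta]$ holds precisely when $\eta_y(\alpha)=\eta_y(\beta)$. Hence $\overline{\eta}$ is a natural isomorphism and $S\cong \Omega^x_S$.

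Second, we count. The assignment $S\mapsto(x,\sim^S)$ goes from simple $\cat{C}$-sets to pairs consisting of an object $x\in\cat{C}_0$ and a congruence on $Hom_{\cat{C}}(x,-)$. Isomorphic $\cat{C}$-sets may give different pairs, but this does not matter. What matters is that every isomorphism class of simple $\cat{C}$-sets is hit by some pair, via $S\cong Hom_{\cat{C}}(x,-)/\sim^S$. Since $\cat{C}$ is finite, $\cat{C}_0$ is finite and each $Hom_{\cat{C}}(x,y)$ is finite. A congruence is a family of equivalence relations, one on each of the finitely many finite sets $Hom_{\cat{C}}(x,y)$, so for each $x$ there are only finitely many congruences. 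There are therefore finitely many pairs, and hence finitely many quotient functors. This gives finitely many isomorphism classes of simple $\cat{C}$-sets, bounded by $\sum_{x\in\cat{C}_0}\prod_{y\in\cat{C}_0} B_{|Hom_{\cat{C}}(x,y)|}$, where $B_n$ is the $n$-th Bell number. Theorem~\ref{simple ang quotinet} characterizes which of these quotients are actually simple, but the corollary needs only this upper bound.

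The main obstacle is the first step, specifically checking that $\overline{\eta}$ is bijective at every object $y$. This includes objects $y$ outside $supp(S)$, where both sides must be empty. That holds because $Hom_{\cat{C}}(x,y)\neq\emptyset$ would force $S(y)\neq\emptyset$ via $\eta_y$, so at such $y$ the domain $Hom_{\cat{C}}(x,y)$ is empty as well. Surjectivity at each $y$ rests on simplicity through Lemma~\ref{orbita simple}. The rest is finite counting.
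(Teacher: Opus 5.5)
Your proof is correct and takes the same approach as the paper: you realize each simple $\cat{C}$-set as a quotient $Hom_{\cat{C}}(x,-)/\sim^S$ of a representable functor, then use finiteness of $\cat{C}_0$ and of each $Hom_{\cat{C}}(x,y)$ to bound the number of such quotients. Your version is more complete than the paper's, since you verify through the universal property of the quotient that $S\cong\Omega^x_S$ (the paper only asserts that $\eta_y$ is a bijection and then cites Theorem~\ref{simple ang quotinet}), and you also give an explicit Bell-number bound.
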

\begin{proof}
By Theorem~\ref{simple ang quotinet}, every simple $\cat{C}$-set is isomorphic to a quotient of \- 
$ Hom_{\cat{C}}(x,-)$ for some $x \in \cat{C}_0$. Since $\cat{C}_0$ is finite and each set $Hom_{\cat{C}}(x,y)$ is finite for all $y \in \cat{C}_0$, it follows that $Hom_{\cat{C}}(x,-)$ admits only finitely many quotients. Hence, there are only finitely many isomorphism classes of simple $\cat{C}$-sets.
\end{proof}

\begin{defi}
We define the \emph{socle} of a $\cat{C}$-set $\Omega$ as the union of all simple $\cat{C}$-subsets contained in it, that is,
\[
   soc(\Omega) := \bigcup \{\, S \mid S \text{ is a simple $\cat{C}$-set and } S \subseteq \Omega \,\}.
\]

A $\cat{C}$-set is said to be \emph{semisimple} if it is equal to its socle. Moreover, if $\Omega$ and $\Psi$ are isomorphic  $\cat{C}$-sets, then $soc(\Omega) \cong soc(\Psi)$.
\end{defi}
\begin{prop} \label{nat simple to soc}
Let $S$ be a simple $\cat{C}$-set, let $\Omega$ be a $\cat{C}$-set, and let $\eta \colon S \to \Omega$ be a natural transformation. Then $\eta$ factors through $soc(\Omega)$.
\[
\begin{tikzcd}
S \arrow[r,"\eta"] \arrow[dr,dashed,"\eta"] &
\Omega \\
&
\operatorname{soc}(\Omega) \arrow[u,hook]
\end{tikzcd}
\]
\end{prop}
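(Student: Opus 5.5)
The plan is to show that the image of $\eta$ is a simple $\cat{C}$-subset of $\Omega$, or is empty; in either case it lies in $soc(\Omega)$. If $\eta$ is the empty natural transformation, which happens exactly when $S$ is empty, it factors trivially through $soc(\Omega)$, since the empty $\cat{C}$-set is a subset of everything. So assume $S$ is non-empty and define the image $\eta(S)$ objectwise by $\eta(S)(y) := \eta_y(S(y))$. Naturality, $\Omega(\alpha)\circ\eta_y = \eta_z\circ S(\alpha)$, shows that $\Omega(\alpha)$ maps $\eta(S)(y)$ into $\eta(S)(z)$. Hence $\eta(S)$, with the restricted action, is a $\cat{C}$-subset of $\Omega$.

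The key step is to show that $\eta(S)$ is simple. Let $\Lambda\subseteq \eta(S)$ be a non-empty $\cat{C}$-subset, and pick $y$ and $b\in\Lambda(y)$. Write $b=\eta_y(a)$ with $a\in S(y)$. By Lemma~\ref{orbita simple}, $\cat{C}^S\cdot a = S$. So every element of $S(z)$ has the form $S(\alpha)(a)$ for some $\alpha\in\Hom{C}{y}{z}$. Applying $\eta_z$ and naturality, every element of $\eta(S)(z)$ equals $\Omega(\alpha)(b)$, and this lies in $\Lambda(z)$ because $\Lambda$ is closed under the action. Therefore $\Lambda=\eta(S)$, and $\eta(S)$ is simple. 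Equivalently, one can say that $\eta(S)=\cat{C}^\Omega\cdot b$ for any element $b$ of the image.

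Since $\eta(S)$ is a simple $\cat{C}$-subset of $\Omega$, it is contained in $soc(\Omega)$ by the definition of the socle. Thus each $\eta_y$ takes values in $soc(\Omega)(y)$, and $\eta$ corestricts to a natural transformation $S\to soc(\Omega)$ whose composite with the inclusion is $\eta$. Naturality of the corestriction is inherited from $\eta$, because $soc(\Omega)$ carries the restricted action; this presumes that the union defining $soc(\Omega)$ is a well-defined $\cat{C}$-subset, which holds since unions of subfunctors are subfunctors.

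The only real point is the simplicity of the image. It is short once Lemma~\ref{orbita simple} is used to generate $S$ from a single preimage element $a$.
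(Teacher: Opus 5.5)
Your proof is correct, including the handling of the empty case and the observation that a union of subfunctors of $\Omega$ is again a subfunctor. The paper states this proposition without proof. Your key step is that the image $\eta(S)$ of a non-empty simple $\cat{C}$-set equals $\cat{C}^\Omega\cdot b$ for any $b$ in it, so it is simple and lies in $soc(\Omega)$. This is the same fact the paper assumes without justification in the proof of Lemma~\ref{soc}, where it asserts that $\pi_1(S)$ and $\pi_2(S)$ are simple, so your argument also supplies the justification missing there.
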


\begin{rem}
By Lemma~\ref{orbita simple}, the union of simple $\cat{C}$-subsets of $\Omega$ is in fact a disjoint union of simple $\cat{C}$-subsets of $\Omega$.
\end{rem}

\begin{lem} \label{soc}
Let $\Omega$ and $\Lambda$ be $\cat{C}$-sets. Then $ soc( \Omega \times \Lambda) = soc\bigl(soc(\Omega) \times soc(\Lambda)\bigr). $
\end{lem}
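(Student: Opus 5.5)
We prove the equality $soc(\Omega\times\Lambda)=soc\bigl(soc(\Omega)\times soc(\Lambda)\bigr)$ by double inclusion of $\cat{C}$-subsets of $\Omega\times\Lambda$. Note first that $soc(\Omega)\times soc(\Lambda)$ is a $\cat{C}$-subset of $\Omega\times\Lambda$, since $soc(\Omega)\subseteq\Omega$ and $soc(\Lambda)\subseteq\Lambda$ are subfunctors and products are computed objectwise. So both sides are unions of simple $\cat{C}$-subsets of $\Omega\times\Lambda$, and it suffices to compare which simple $\cat{C}$-subsets occur in each.

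For the inclusion $\supseteq$, let $S$ be a simple $\cat{C}$-subset of $soc(\Omega)\times soc(\Lambda)$. Then $S$ is also a simple $\cat{C}$-subset of $\Omega\times\Lambda$: being a subfunctor is transitive, and simplicity is intrinsic to $S$. Hence $S\subseteq soc(\Omega\times\Lambda)$, and taking the union over all such $S$ gives the inclusion.

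For the inclusion $\subseteq$, let $S\subseteq\Omega\times\Lambda$ be a simple $\cat{C}$-subset with inclusion $\iota\colon S\to\Omega\times\Lambda$. Compose with the projections to obtain natural transformations $p_1\circ\iota\colon S\to\Omega$ and $p_2\circ\iota\colon S\to\Lambda$. By Proposition~\ref{nat simple to soc}, each factors through the corresponding socle. Concretely, for every $x$ and every $(a,b)\in S(x)$ we have $a\in soc(\Omega)(x)$ and $b\in soc(\Lambda)(x)$. If one prefers not to rely on that proposition directly, we can argue it by hand: the image of $S$ under $p_1\circ\iota$ is the $\cat{C}$-subset $\cat{C}^\Omega\cdot a$. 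This image is simple, because any nonempty subset of it pulls back to a nonempty $\cat{C}$-subset of $S$, which must be all of $S$ by Lemma~\ref{orbita simple}. Thus $S(x)\subseteq soc(\Omega)(x)\times soc(\Lambda)(x)$ for all $x$, so $S$ is a simple $\cat{C}$-subset of $soc(\Omega)\times soc(\Lambda)$ (the restricted action is just that of $\Omega\times\Lambda$). Therefore $S\subseteq soc\bigl(soc(\Omega)\times soc(\Lambda)\bigr)$.

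The only delicate point is the step using Proposition~\ref{nat simple to soc}, which the paper states without proof. It amounts to showing that the image of a simple $\cat{C}$-set under a natural transformation is a simple $\cat{C}$-subset. I would verify this as sketched above: a nonempty image of a simple $S$ equals $\cat{C}^\Omega\cdot\eta_x(s)$ for any $s\in S(x)$, by Lemma~\ref{orbita simple} and naturality. Any $\cat{C}$-subset $T$ of this image containing some $\eta_y(t)$ then contains $\eta(\cat{C}^S\cdot t)=\eta(S)$, so $T$ is the whole image. Everything else is bookkeeping with subfunctors.
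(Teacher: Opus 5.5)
Your proof is correct and follows the same route as the paper. It uses a double inclusion: $\supseteq$ comes from $soc(\Omega)\times soc(\Lambda)\subseteq\Omega\times\Lambda$, and $\subseteq$ comes from projecting a simple $\cat{C}$-subset $S\subseteq\Omega\times\Lambda$ onto each factor and noting that the images lie in the respective socles. Your pull-back argument, showing that the image of a simple $\cat{C}$-set under a natural transformation is simple, supplies the justification the paper leaves implicit when it asserts that $\pi_1(S)$ and $\pi_2(S)$ are simple.
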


\begin{proof}
First note that $soc(\Omega) \subseteq \Omega$ and $soc(\Lambda) \subseteq \Lambda$, hence
$ 
    soc(\Omega) \times soc(\Lambda) \subseteq \Omega \times \Lambda.
$
Taking socles, we obtain
$ 
    soc\bigl(soc(\Omega) \times soc(\Lambda)\bigr) \subseteq soc(\Omega \times \Lambda).
$

For the reverse inclusion, let $S \subseteq \Omega \times \Lambda$ be a simple $\cat{C}$-subset. Consider the projections
\[
    \pi_1 \colon \Omega \times \Lambda \to \Omega, \qquad
    \pi_2 \colon \Omega \times \Lambda \to \Lambda.
\]
Then $\pi_1(S)$ and $\pi_2(S)$ are $\cat{C}$-subsets of $\Omega$ and $\Lambda$, respectively. Since $S$ is simple, it follows that $\pi_1(S)$ and $\pi_2(S)$ are simple. As $S \neq \emptyset$, both are nonempty, hence simple. Therefore,
\[
    \pi_1(S) \subseteq soc(\Omega), \qquad \pi_2(S) \subseteq soc(\Lambda).
\]
It follows that
\[
    S \subseteq soc(\Omega) \times soc(\Lambda),
\]
and hence
\[
    soc(\Omega \times \Lambda) \subseteq soc\bigl(soc(\Omega) \times soc(\Lambda)\bigr).
\]

This completes the proof.
\end{proof}

\begin{defi}
Let $S$ and $T$ be simple $\cat{C}$-sets. We define 
\[
    [S] \star [T] := [soc(S \times T)],
\]
where $[S]$ and $[T]$ denote the isomorphism classes of $S$ and $T$, respectively. By Lemma~\ref{soc}, this product is well defined and is distributive with respect to disjoint union.
\end{defi}
\begin{prop}
The abelian group $(B^{S}(\cat{C}),+)$, endowed with the product $\star$, is a commutative ring with identity.
\end{prop}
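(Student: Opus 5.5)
We would prove this by realising $\star$ concretely on semisimple $\cat{C}$-sets. By the Remark after Proposition~\ref{nat simple to soc}, every socle is a disjoint union of simple $\cat{C}$-subsets. Hence every element of $B^S(\cat{C})$ is a difference $[\Omega]-[\Omega']$ of classes of semisimple $\cat{C}$-sets, and $[\Omega]$ for semisimple $\Omega$ is the sum of the classes of its simple constituents. We therefore extend $\star$ by setting $[\Omega]\star[\Lambda]:=[soc(\Omega\times\Lambda)]$ for semisimple $\Omega,\Lambda$.

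\textbf{Well-definedness.} Two facts give this.
\begin{itemize}
\item An isomorphism $\Omega\times\Lambda\cong\Omega'\times\Lambda'$ carries simple subsets to simple subsets, so it induces $soc(\Omega\times\Lambda)\cong soc(\Omega'\times\Lambda')$.
\item The identities $(\Omega_1\sqcup\Omega_2)\times\Lambda=(\Omega_1\times\Lambda)\sqcup(\Omega_2\times\Lambda)$ and $soc(A\sqcup B)=soc(A)\sqcup soc(B)$ hold. For the latter, a simple subset of $A\sqcup B$ is indecomposable, so it lies in $A$ or in $B$.
\end{itemize}
Together these show that $\star$ is biadditive on the monoid of isomorphism classes of semisimple $\cat{C}$-sets. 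It therefore extends uniquely to a biadditive map on the Grothendieck group, which agrees with the definition on generators.

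\textbf{Commutativity.} The swap map $S\times T\cong T\times S$ is an isomorphism, so the two socles are isomorphic.

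\textbf{Associativity.} For simple $R,S,T$ we need $soc(soc(R\times S)\times T)\cong soc(R\times soc(S\times T))$. Applying Lemma~\ref{soc} with $\Omega=R\times S$ and $\Lambda=T$, and using $soc(T)=T$, gives
\[
soc((R\times S)\times T)=soc\bigl(soc(R\times S)\times T\bigr).
\]
The same lemma gives $soc(R\times(S\times T))=soc(R\times soc(S\times T))$. Associativity of the cartesian product, up to natural isomorphism, then closes the argument.

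\textbf{Identity.} We expect this step to be the main obstacle. The natural candidate is $[soc(1_{\cat{C}})]$. The difficulty is that $1_{\cat{C}}$ need not be simple, for instance when $\cat{C}$ has a non-invertible morphism into an object from which nothing returns. In fact $1_\cat{C}$ is simple exactly when every object admits a morphism to every other, since its only subfunctors are the upward-closed sets of objects.

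We will check that $soc(1_\cat{C})\times S$ contains $S$ as its socle. First, $1_\cat{C}\times S\cong S$, because $1_\cat{C}$ is terminal. Then Lemma~\ref{soc} gives
\[
soc(S)=soc(1_\cat{C}\times S)=soc\bigl(soc(1_\cat{C})\times S\bigr),
\]
so $[soc(1_\cat{C})]\star[S]=[S]$. This uses only Lemma~\ref{soc}, not simplicity of $1_\cat{C}$, so $e:=[soc(1_\cat{C})]$ is the unit. We must also check that $e$ lies in $B^S(\cat{C})$ and is nonzero. It lies in $B^S(\cat{C})$ because it is a disjoint union of simples. It is nonzero because $1_\cat{C}$ has a minimal nonempty subfunctor: take the upward closure of a terminal strongly connected component, which is nonempty since $\cat{C}$ is finite.

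Distributivity holds by construction through biadditivity, which completes the ring axioms.
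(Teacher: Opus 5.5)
Your proof is correct and follows the same route as the paper. The unit is $[soc(1_{\cat{C}})]$, obtained via Lemma~\ref{soc} together with $1_{\cat{C}}\times S\cong S$; distributivity comes from $soc$ commuting with $\sqcup$ and $\times$ distributing over $\sqcup$; associativity comes from associativity of $\times$. You also make explicit points the paper leaves implicit, namely well-definedness via the biadditive extension to semisimple $\cat{C}$-sets, commutativity via the swap isomorphism, and the use of Lemma~\ref{soc} to identify $soc(soc(R\times S)\times T)$ with $soc(R\times S\times T)$ in the associativity step.
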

\begin{proof}
Let $1_\cat{C} \colon \cat{C} \to \set$ be the functor defined by $1_\cat{C}(x) = \{\, \cdot \,\}$ (the unit of $B(\cat{C})$). We define
\[
    1_{B^S(\cat{C})} := soc(1_\cat{C}).
\]
Then $1_{B^S(\cat{C})}$ is the unit of $B^S(\cat{C})$. Indeed, for any simple $\cat{C}$-set $T$, we have
\begin{align*}
    [T] \star [1_{B^S(\cat{C})}]
    &= [soc(T \times 1_{B^S(\cat{C})})] \\
    &= [soc(T \times soc(1_\cat{C}))] \\
    &= [soc(soc(T) \times soc(1_\cat{C}))].
\end{align*}
By Lemma~\ref{soc}, this is equal to
$[soc(T \times 1_\cat{C})] = [soc(T)] = [T].$
By definition of the socle, the socle of a disjoint union is the disjoint union of the socles. Using also the distributivity of $\times$ over $\sqcup$, for any simple $\cat{C}$-sets $T, R_1, \dots, R_n$, we obtain
\begin{align*}
    [T] \star \left( \sum_{i=1}^n [R_i] \right)
    &= [soc\bigl(T \times \bigsqcup_{i=1}^n R_i\bigr)] \\
    &= [soc\bigl(\bigsqcup_{i=1}^n (T \times R_i)\bigr)] \\
    &= \sum_{i=1}^n [soc(T \times R_i)] \\
    &= \sum_{i=1}^n [T] \star [R_i].
\end{align*}

Finally, the associativity of $\star$ follows from the associativity of the Cartesian product $\times$.
\end{proof}
\begin{prop}
The map induced  by 
\begin{align*}
   Soc \colon B(\cat{C}) &\longrightarrow B^S(\cat{C}),\\
    [\Omega] &\longmapsto [soc(\Omega)],
\end{align*}
is a ring homomorphism.
\end{prop}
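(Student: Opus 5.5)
The plan is to check that $Soc$ is well defined on $B(\cat{C})$, then that it is additive, multiplicative and unital. Since $B(\cat{C})$ is the Grothendieck group of $(\cat{C}\text{-set},\sqcup)$, it suffices to define $Soc$ on isomorphism classes of $\cat{C}$-sets and show it is additive with respect to $\sqcup$. The universal property of the Grothendieck group then extends it uniquely to a group homomorphism $B(\cat{C})\to B^S(\cat{C})$. Well-definedness on classes is the remark after the definition of the socle: isomorphic $\cat{C}$-sets have isomorphic socles. Moreover $soc(\Omega)$ is a disjoint union of simple $\cat{C}$-subsets, by the remark following Proposition~\ref{nat simple to soc}. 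Hence $[soc(\Omega)]$ is a sum of classes of simple $\cat{C}$-sets and lies in $B^S(\cat{C})$.

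For additivity I will show $soc(\Omega\sqcup\Lambda)=soc(\Omega)\sqcup soc(\Lambda)$, as already used in the proof that $B^S(\cat{C})$ is a ring. The inclusion $\supseteq$ is clear, since simple subsets of $\Omega$ or of $\Lambda$ are simple subsets of $\Omega\sqcup\Lambda$. For $\subseteq$, let $S\subseteq\Omega\sqcup\Lambda$ be simple. Then $S=(S\cap\Omega)\sqcup(S\cap\Lambda)$ as $\cat{C}$-sets, because $\Omega$ and $\Lambda$ are closed under the action. Since $S$ is simple, hence indecomposable, one of the two pieces is empty, so $S$ lies in $\Omega$ or in $\Lambda$.

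For multiplicativity, take $\cat{C}$-sets $\Omega,\Lambda$. Write $soc(\Omega)=\bigsqcup_i S_i$ and $soc(\Lambda)=\bigsqcup_j T_j$ with $S_i,T_j$ simple. By Lemma~\ref{soc},
\[
[soc(\Omega\times\Lambda)]=[soc(soc(\Omega)\times soc(\Lambda))]=\Bigl[soc\Bigl(\bigsqcup_{i,j}S_i\times T_j\Bigr)\Bigr].
\]
By distributivity of $\times$ over $\sqcup$ and the additivity just proved, this equals
\[
\sum_{i,j}[soc(S_i\times T_j)]=\sum_{i,j}[S_i]\star[T_j]=[soc(\Omega)]\star[soc(\Lambda)].
\]
Bilinearity of $\star$ gives the last equality. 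Extending to formal differences is automatic once the map is additive and multiplicative on generators.

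Unitality holds by definition: $Soc([1_\cat{C}])=[soc(1_\cat{C})]=1_{B^S(\cat{C})}$. The main point needing care is the additivity of the socle, in particular that a simple subset of a disjoint union lies entirely in one summand. This is handled by indecomposability of simple $\cat{C}$-sets. Everything else reduces to Lemma~\ref{soc}.
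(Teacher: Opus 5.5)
Your proof is correct and follows the paper's route. Additivity comes from $soc(\Omega\sqcup\Lambda)=soc(\Omega)\sqcup soc(\Lambda)$, multiplicativity from Lemma~\ref{soc}, and unitality from $soc(1_{\cat{C}})=1_{B^S(\cat{C})}$. You also justify two steps the paper only asserts: that a simple $\cat{C}$-subset of a disjoint union lies in one summand, and that $[soc(soc(\Omega)\times soc(\Lambda))]=[soc(\Omega)]\star[soc(\Lambda)]$ by bilinearity of $\star$.
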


\begin{proof}
We first show that $soc$ preserves addition. Let $\Omega$ and $\Psi$ be $\cat{C}$-sets. Since the socle of a disjoint union is the disjoint union of the socles, we have
\[
   soc(\Omega \sqcup \Psi) =soc(\Omega) \sqcup soc(\Psi).
\]
Therefore,
\[
   Soc([\Omega] + [\Psi]) = [soc(\Omega \sqcup \Psi)] = [soc(\Omega)] + [soc(\Psi)]=soc([\Omega]) + Soc([\Psi]). 
\]

Next, we show that $soc$ preserves multiplication. By definition of the product,
\[
    [\Omega] \cdot [\Psi] = [\Omega \times \Psi],
\]
and by Lemma~\ref{soc}, we have
\[
   soc(\Omega \times \Psi) =soc(soc(\Omega) \times soc(\Psi)).
\]
Thus,
\begin{align*}
     Soc([\Omega] \star [\Psi]) &= [soc(\Omega \times \Psi)]
    = [soc(soc(\Omega) \times soc(\Psi))]\\&
    = [soc(\Omega)] \star [soc(\Psi)]=Soc([\Omega] )\star Soc([\Psi]).
\end{align*}

Finally, $Soc$ preserves the identity. and $soc(1_{\cat{C}})= 1_{B^S(\cat{C})}$.  Hence,  $Soc$ is a unital ring homomorphism.
\end{proof}

\section{The marks of the ring $B^S(\cat{C})$}\label{marks}
In this section, we describe the ring homomorphisms from $B^S(\cat{C})$ to $\mathbb{Z}$. The study of these homomorphisms in the case of the Burnside ring of a finite group has led to several remarkable results. For instance, groups with isomorphic Burnside rings have the same sublattice of solvable normal subgroups and the same spectrum (see~\cite{irreducible}). Recall that if $F$ and $G$ are functors from the category $\cat{C}$ to a category $\cat{D}$, then $Hom_{\cat{C}\text{-set}}(F,G)$ denotes the set of natural transformations from $F$ to $G$, and $|Hom_{\cat{C}\text{-set}}(F,G)|$ denotes its cardinality.

\begin{defi}
Let $S$ be a simple $\cat{C}$-set. The \emph{mark} of $S$ is the function
\begin{align*}
    \varphi_S \colon B^S(\cat{C}) &\longrightarrow \mathbb{Z},\\
    [\Omega] &\longmapsto |Hom_{\cat{C}\text{-set}}(S,\Omega)|.
\end{align*}
This function is well defined: if $\Omega \cong \Psi$, then $Hom_{\cat{C}\text{-set}}(S,\Omega) \cong Hom_{\cat{C}\text{-set}}(S,\psi)$.
\end{defi}
\begin{prop}
Let $S$ be a simple $\cat{C}$-set. Then the mark of $S$ is a ring homomorphism.
\end{prop}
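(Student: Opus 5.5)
The plan is to reduce the statement to Proposition~\ref{mark} by factoring the mark through the homomorphism $Soc$. Let $\varphi_S^{B}\colon B(\cat{C})\to\mathbb{Z}$ denote the mark of $S$ on the full Burnside ring, and $\varphi_S$ the mark on $B^S(\cat{C})$. Since $S$ is simple, it is indecomposable, so Proposition~\ref{mark} applies and $\varphi_S^{B}$ is a unital ring homomorphism. I will show that
\[
\varphi_S^{B}([\Omega]) = \varphi_S([soc(\Omega)])
\]
for every $\cat{C}$-set $\Omega$. Together with the additive inclusion $B^S(\cat{C})\subseteq B(\cat{C})$, this will give everything.

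\textbf{Step 1.} For any $\cat{C}$-set $\Omega$, composing with the inclusion $soc(\Omega)\hookrightarrow\Omega$ gives a map
\[
Hom_{\cat{C}\text{-set}}(S,soc(\Omega))\to Hom_{\cat{C}\text{-set}}(S,\Omega).
\]
It is injective because the inclusion is a monomorphism (componentwise injective). It is surjective by Proposition~\ref{nat simple to soc}, which says every $\eta\colon S\to\Omega$ factors through $soc(\Omega)$. To make this airtight, I will verify that the image of $\eta$ is a $\cat{C}$-subset of $\Omega$ and is simple. It is simple because it is a quotient of $S$: any nonempty subfunctor of the image pulls back to a nonempty subfunctor of $S$, hence all of $S$, hence the whole image. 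The image is therefore contained in $soc(\Omega)$. The empty transformation only occurs if $S$ is empty, which is excluded by simplicity. Hence $|Hom(S,\Omega)|=|Hom(S,soc(\Omega))|$.

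\textbf{Step 2.} Apply Step 1 to products. If $T,R$ are simple, then
\[
\varphi_S([T]\star[R])=|Hom(S,soc(T\times R))|=|Hom(S,T\times R)|=|Hom(S,T)|\cdot|Hom(S,R)|,
\]
where the last equality is the universal property of the product, as in Proposition~\ref{mark}. Additivity over disjoint unions of simples is exactly the additivity argument of Proposition~\ref{mark}, using that $S$ is indecomposable. The identity is handled by
\[
\varphi_S(soc(1_\cat{C}))=|Hom(S,1_\cat{C})|=1,
\]
using Step 1 again. The map then extends $\mathbb{Z}$-linearly to the Grothendieck group, and multiplicativity extends by bilinearity.

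The main obstacle is the factorization in Step 1, in particular checking that the image of a natural transformation out of a simple $\cat{C}$-set is again simple, so that it really lands inside the socle. Everything else is formal once this is in place.
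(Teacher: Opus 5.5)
Your proposal is correct and follows essentially the same route as the paper. Both arguments equate $|Hom_{\cat{C}\text{-set}}(S,\Omega)|$ with $|Hom_{\cat{C}\text{-set}}(S,soc(\Omega))|$ by factoring through the socle, take additivity from indecomposability as in Proposition~\ref{mark}, the unit from $soc(1_{\cat{C}})$, and multiplicativity from the universal property of the product. The only difference is that your Step 1 actually proves that the image of a map out of a simple $\cat{C}$-set is simple and hence lies in the socle, whereas the paper simply cites Proposition~\ref{nat simple to soc}, which it states without proof.
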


\begin{proof}
Note that $B^S(\cat{C})$ is a subgroup of $B(\cat{C})$. then $\varphi_S$ is  abelian group homomorphism. By Proposition~\ref{nat simple to soc}, we have
\[
    |Hom_{\cat{C}\text{-set}}(S,\Omega)| = |Hom_{\cat{C}\text{-set}}(S,\mathrm{soc}(\Omega))|.
\]

Note that $1_{\cat{C}}(x)=\{ \cdot \}$ for all $x\in \cat{C}_0$. Hence,
\[
   1= |Hom_{\cat{C}\text{-set}}(S,1_{B^S(\cat{C})})| =| Hom_{\cat{C}\text{-set}}(S,soc(1_{\cat{C}}) )| = \varphi_S(1_{B^S(\cat{C})}),
\]
so $\varphi_S$ preserves the identity. The proof of additivity is identical to that of Proposition~\ref{mark}. We now prove multiplicativity. By definition,
\[
[R] \star [T] =[\mathrm{soc}(\mathrm{soc}(R)\times \mathrm{soc}(T))]= [\mathrm{soc}(R \times T)],
\]
and therefore
\[
\bigl|Hom_{\cat{C}\text{-set}}(S, R \star T)\bigr|
= \bigl|Hom_{\cat{C}\text{-set}}(S, \mathrm{soc}(R \times T))\bigr|.
\]
By Proposition~\ref{nat simple to soc},
\[
\bigl|Hom_{\cat{C}\text{-set}}(S, \mathrm{soc}(R \times T))\bigr|
= \bigl|Hom_{\cat{C}\text{-set}}(S, R \times T)\bigr|.
\]
Moreover,
\[
Hom_{\cat{C}\text{-set}}(S, R \times T)
\cong Hom_{\cat{C}\text{-set}}(S,R) \times Hom_{\cat{C}\text{-set}}(S,T),
\]
hence
\[
\bigl|Hom_{\cat{C}\text{-set}}(S, R \times T)\bigr|
= \bigl|Hom_{\cat{C}\text{-set}}(S,R)\bigr| \cdot \bigl|Hom_{\cat{C}\text{-set}}(S,T)\bigr|.
\]
Thus,$\varphi_S([R] \star [T])
= \varphi_S([R]) \cdot \varphi_S([T]).$
Therefore, $\varphi_S$ preserves addition, multiplication, and the identity, and hence it is a ring homomorphism.
\end{proof}
\begin{thm}\label{funtion and marcks}
Let $R$ be a ring of integers and let   $\varphi \colon B^S(\cat{C}) \to R$  be a ring homomorphism. Then there exists a simple $\cat{C}$-set $S$ such that
\[
    \varphi = \varphi_S\cdot 1_R.
\]
\end{thm}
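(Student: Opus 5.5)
The plan is to follow the classical argument for the Burnside ring of a finite group, with the mark matrix playing the central role. We order the finitely many isomorphism classes of simple $\cat{C}$-sets $S_1,\dots,S_n$ (finiteness is the Corollary after Theorem~\ref{simple ang quotinet}) so that $|S_i|\le |S_j|$ whenever $i<j$. Consider the matrix $M=(\varphi_{S_i}([S_j]))_{i,j}$. By Proposition~\ref{| | <| |}, a nonempty morphism $S_i\to S_j$ forces $|S_i|\ge|S_j|$. By Corollary~\ref{| | =| |}, if $|S_i|=|S_j|$ then any such morphism is an isomorphism, so it forces $i=j$. Hence $M$ is triangular with diagonal entries $|\mathrm{Aut}(S_i)|\ge 1$. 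Therefore the map
\[
\Phi=(\varphi_{S_1},\dots,\varphi_{S_n})\colon B^S(\cat{C})\to \mathbb{Z}^n
\]
is an injective ring homomorphism, with image of finite index $d=\prod_i|\mathrm{Aut}(S_i)|$. This uses the fact, proved in the preceding Proposition, that each $\varphi_{S_i}$ is a ring homomorphism.

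Next, let $\varphi\colon B^S(\cat{C})\to R$ be a ring homomorphism, where $R$ is an integral domain of characteristic zero (a ring of integers). Since $\Phi$ is injective with cokernel killed by $d$, the extension $\Phi\otimes\mathbb{Q}\colon B^S(\cat{C})\otimes\mathbb{Q}\to\mathbb{Q}^n$ is an isomorphism of $\mathbb{Q}$-algebras. Tensor $\varphi$ with $\mathbb{Q}$ to get $\varphi_{\mathbb{Q}}\colon B^S(\cat{C})\otimes\mathbb{Q}\to R\otimes\mathbb{Q}=\mathrm{Frac}$-type domain $K$; this is still unital, and injectivity of $R\to K$ holds because $R$ is torsion-free. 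Composing with $(\Phi\otimes\mathbb{Q})^{-1}$ gives a unital ring map $\mathbb{Q}^n\to K$. Because $K$ is a domain, the images of the standard idempotents $e_i$ are idempotents in $K$, hence each is $0$ or $1$. They are also pairwise orthogonal and sum to $1$, so exactly one $e_k$ maps to $1$. Thus the map is the $k$-th projection followed by $\mathbb{Q}\to K$. Restricting to $B^S(\cat{C})$ gives $\varphi(x)=\varphi_{S_k}(x)\cdot 1_R$, so $S=S_k$ works.

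The main obstacle is the triangularity step. We must check that $\varphi_{S_i}([S_j])=0$ in the right pattern: a natural transformation between simple $\cat{C}$-sets is either empty or surjective componentwise, and this requires $\operatorname{supp}(S_i)\subseteq\operatorname{supp}(S_j)$. Only surjectivity, i.e.\ the cardinality inequality from Proposition~\ref{| | <| |}, is actually needed, so ordering by $|S_i|$ with ties handled by Corollary~\ref{| | =| |} should suffice. A minor point is interpreting ``ring of integers'' as a characteristic-zero domain. If instead $R=\mathbb{Z}$ is meant, the argument is the same with $K=\mathbb{Q}$.
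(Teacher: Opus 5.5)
Your proof is correct, but it follows a different route from the paper's.

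\textbf{The paper's route.} The paper argues in the style of Dress. Since $\varphi(1_{B^S(\cat{C})})=1_R$, it picks a simple $S$ with $\varphi([S])\neq 0$ and $|S|$ maximal. For an arbitrary simple $T$ it evaluates both $\varphi$ and $\varphi_S$ on $[S]\star[T]=[\mathrm{soc}(S\times T)]=\sum_i[U_i]$. Each $U_i$ surjects onto $S$, so $|U_i|\ge|S|$. The summands with $|U_i|=|S|$ are copies of $S$. The others are killed by $\varphi$ (by maximality) and by $\varphi_S$ (by the order inequality); the paper's sentence ``we must have $|U_i|=|S|$'' should be read this way. Cancelling $\varphi([S])$, respectively $\varphi_S([S])$, shows that $\varphi([T])$ and $\varphi_S([T])$ both equal $|\{i\mid U_i\cong S\}|$.

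\textbf{Your route.} You build the ghost map $\Phi\colon B^S(\cat{C})\to\mathbb{Z}^n$. Using the triangular table of marks, you show it is injective with finite cokernel of order $\prod_i|\mathrm{Aut}(S_i)|$. You then classify homomorphisms out of $B^S(\cat{C})\otimes\mathbb{Q}\cong\mathbb{Q}^n$ via the primitive idempotents.

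\textbf{What each buys.} Your approach yields more structure than the paper records. Your triangularity argument actually proves that $B^S(\cat{C})$ is free on the simple classes, and it gives a ghost-ring embedding that the paper never states. However, it genuinely needs characteristic $0$, since $R\otimes\mathbb{Q}=0$ when $\mathrm{char}\,R=p$. The paper's argument only cancels a nonzero element in a domain, so it works verbatim for every integral domain $R$. This matters because the paper later applies the theorem to $B^S(\cat{C})/\mathfrak{p}$ in computing the spectrum, and that quotient may have characteristic $p$. To cover that case with your method you would need an extra ingredient, for example lying over for the integral extension $\Phi(B^S(\cat{C}))\subseteq\mathbb{Z}^n$.
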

\begin{proof}
    Since $\varphi$ is a ring homomorphism, we have $\varphi(1_{B^S(\cat{C})})=1_R$. Hence, there exists a simple $\cat{C}$-set $S$ such that $\varphi([S])\neq 0$. Choose such an $S$ with maximal cardinality $|S|$.

Let $T$ be a simple $\cat{C}$-set. Then $\varphi([S] \star [T]) = \varphi([S])\varphi([T]).
 $
By definition of the product, $[S] \star [T] = [\mathrm{soc}(S \times T)]. $ Write
\[
\mathrm{soc}(S \times T) = \bigsqcup_{i=1}^n U_i,
\]
where each $U_i$ is simple. Since each $U_i \leq S \times T$, consider the projection
\[
\pi_S \colon U_i \longrightarrow S.
\]
As $U_i$ is simple, either $\pi_S$ is the empty map or surjective. Since $U_i \neq \emptyset$, it follows that $\pi_S$ is surjective, and hence $ |S| \leq |U_i|.
$
By maximality of $|S|$, we must have $|U_i| = |S|$, and therefore $U_i \cong S.$ Thus,
\[
\varphi([S] \star [T]) = \sum_{i=1} \varphi([U_i])
= |\{\, i \mid U_i \cong S \,\} |\cdot \varphi([S]).
\]
Since $\varphi([S] \star [T])=\varphi([S])\varphi([T])$ and $\varphi([S])\neq 0$, we conclude that
\[
\varphi([T]) = |\{\, i \mid U_i \cong S \,\}|.
\]

On the other hand, $\varphi_S([S] \star [T]) = \sum_i \varphi_S([U_i]).
 $
Since $S$ is simple, we have
\[
\varphi_S([U_i]) =
\begin{cases}
\varphi_S([S]) & \text{if } U_i \cong S,\\
0 & \text{otherwise}.
\end{cases}
\]
Therefore, $\varphi_S([S] \star [T])\
=  \varphi_S([S])\varphi_S([T])=|\{\, i \mid U_i \cong S \,\} |\cdot \varphi_S([S]). $ Hence, $\varphi([T]) = \varphi_S([T]).
$
\end{proof}
\begin{lem}
 Let $S$ be a simple $\cat{C}$-set, and let $T$ be a $\cat{C}$-set. Then, for all $x \in supp(T)$, there exists an injective map
\[
\theta \colon Hom_{\cat{C}\text{-set}}(S,T) \hookrightarrow T(x).
\]
\end{lem}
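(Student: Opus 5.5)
The plan is to send each natural transformation to its value on one fixed element of $S$, and to use Lemma~\ref{orbita simple} to show that this value determines the transformation. We proceed in three steps.

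\textbf{Step 1: the trivial case.} If $Hom_{\cat{C}\text{-set}}(S,T)=\emptyset$, then the empty map into $T(x)$ is injective for every $x\in supp(T)$. So assume there is some $\eta\colon S\to T$. Every simple $\cat{C}$-set is nonempty, so $supp(S)\neq\emptyset$. Since $\eta$ exists, the remark on supports gives $supp(S)\subseteq supp(T)$.

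\textbf{Step 2: objects in $supp(S)$.} Fix $x\in supp(S)$, which lies in $supp(T)$ by Step 1, and fix $a\in S(x)$. Define
\[
\theta\colon Hom_{\cat{C}\text{-set}}(S,T)\to T(x),\qquad \eta\mapsto \eta_x(a).
\]
Suppose $\eta_x(a)=\mu_x(a)$. Let $y\in\cat{C}_0$ and $b\in S(y)$. By Lemma~\ref{orbita simple}, $\cat{C}^S\cdot a=S$, so $b=S(\alpha)(a)$ for some $\alpha\in\Hom{C}{x}{y}$. Naturality then gives
\[
\eta_y(b)=T(\alpha)(\eta_x(a))=T(\alpha)(\mu_x(a))=\mu_y(b).
\]
Hence $\eta=\mu$, and $\theta$ is injective. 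This proves the statement for every $x\in supp(S)\subseteq supp(T)$.

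\textbf{Step 3: objects in $supp(T)\setminus supp(S)$ (the main obstacle).} Here there is no element of $S(x)$ to evaluate at. The natural attempt is to take some $y\in supp(S)$, use the injection $Hom_{\cat{C}\text{-set}}(S,T)\hookrightarrow T(y)$ from Step 2, and then compare $T(y)$ with $T(x)$ through morphisms between $x$ and $y$, using that $\cat{C}$ is connected. However, $T(\beta)$ need not be injective, and morphisms may only go from $x$ towards $y$. This step appears to fail in general.

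Here is a concrete counterexample. Let $\cat{C}$ have two objects $x,y$ and a single non-identity morphism $x\to y$. Let $S(x)=\emptyset$ and $S(y)=\{*\}$; this $S$ is simple. Let $T(x)=\{t\}$ and $T(y)=\{u_1,u_2\}$. Then $|Hom_{\cat{C}\text{-set}}(S,T)|=2>1=|T(x)|$, so no injection into $T(x)$ exists.

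I would therefore carry out Steps 1 and 2 in full. I would state the lemma for $x\in supp(S)$; by Step 1 these are exactly the objects of $supp(T)$ at which the map is meaningful whenever $Hom_{\cat{C}\text{-set}}(S,T)\neq\emptyset$. I would record the example above as showing that the hypothesis cannot be weakened to arbitrary $x\in supp(T)$.
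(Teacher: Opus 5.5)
Your Step 2 is the paper's argument: evaluate each natural transformation at one fixed element $a$, then use that $a$ generates $S$, together with naturality, to get injectivity.

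Your counterexample in Step 3 is also correct, so the statement as written is false for $x\in supp(T)\setminus supp(S)$. The two-object category is connected, so the paper's standing assumption does not rule it out. Your $S$ is simple, and $\eta_y$ may be chosen freely, which gives $|Hom_{\cat{C}\text{-set}}(S,T)|=2>1=|T(x)|$.

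The paper's own proof has the same restriction hidden in it. It picks $a\in T(x)$, but then forms $\beta_x(a)$ and writes $b=S(\psi)(a)$, and both of these only make sense for $a\in S(x)$. So what the paper actually proves is your corrected version, with $x\in supp(S)$.

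The two versions agree in the strongly connected setting of the last section, where every nonempty simple $\cat{C}$-set has full support.
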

 \begin{proof}
Let $x \in supp(T)$ and $a \in T(x)$. If $ Hom_{\cat{C}\text{-set}}(S,T)=\emptyset$, then there is nothing to prove. Assume instead that $ Hom_{\cat{C}\text{-set}}(S,T)\neq \emptyset$,  We define the function
\begin{align*}
    \theta_a \colon Hom_{\cat{C}\text{-set}}(S,T) &\longrightarrow T(x),\\
    \beta &\longmapsto \beta_x(a).
\end{align*}

Let $\alpha, \beta \in Hom_{\cat{C}\text{-set}}(S,T)$ such that $\theta_a(\beta) = \theta_a(\alpha)$, that is,
$\beta_x(a) = \alpha_x(a).$ Let $y \in \cat{C}_0$ and $b \in S(y)$. Since $S$ is simple, there exists $\psi \in \mathrm{Hom}_{\cat{C}}(x,y)$ such that $b = S(\psi)(a).$ Then, by naturality of $\beta$, we have
\[
\beta_y(b) = \beta_y\bigl(S(\psi)(a)\bigr) = T(\psi)\bigl(\beta_x(a)\bigr).
\]
By hypothesis, this is equal to
\[
T(\psi)\bigl(\alpha_x(a)\bigr) = \alpha_y\bigl(S(\psi)(a)\bigr) = \alpha_y(b).
\]

Thus, $\beta_y(b) = \alpha_y(b)$ for all $y \in \cat{C}_0$ and $b \in S(y)$, and therefore $\beta = \alpha$. Hence, $\theta_a$ is injective.

\end{proof}

\section{The spectrum of the ring $B^S(\cat{C})$}\label{spec}
In this section, we give an explicit description of the prime spectrum of the ring $B^S(\cat{C})$. Our approach is based on the mark homomorphisms introduced in the previous sections, which allow us to characterize the prime ideals of $B^S(\cat{C})$.

\begin{defi}
Let $\cat{C}$ be a finite connected category. The \emph{spectrum} of the ring $B^S(\cat{C})$ is defined as
\[
\mathrm{Spec}(B^S(\cat{C})) := \{\, \mathfrak{p} \leq B^S(\cat{C}) \mid \mathfrak{p} \text{ is a prime ideal} \,\}.
\]
\end{defi}
\begin{nota}
Let $S$ be a simple $\cat{C}$-set. We set
\[
\mathfrak{p}_S := \ker(\varphi_S).
\]
Since $\mathbb{Z}$ is an integral domain, $\mathfrak{p}_S$ is a prime ideal of $B^S(\cat{C})$.

Let $p$ be a prime number. Consider the canonical projection
\[
\pi_p \colon \mathbb{Z} \longrightarrow \mathbb{Z}/p\mathbb{Z},
\]
and define the ring homomorphism
\[
\pi_p \circ \varphi_S \colon B^S(\cat{C}) \longrightarrow \mathbb{Z}/p\mathbb{Z}.
\]

Set
\[
\mathfrak{p}_{S,p} := \ker(\pi_p \circ \varphi_S).
\]

Since $\mathbb{Z}/p\mathbb{Z}$ is a field, it follows that $\mathfrak{p}_{S,p}$ is a maximal ideal of $B^S(\cat{C})$.
\end{nota}
\begin{thm}
We have
\[
\mathrm{Spec}(B^S(\cat{C})) = \{\, \mathfrak{p}_S,\ \mathfrak{p}_{S,p} \mid S \text{ simple $\cat{C}$-set} ,\ p \text{ prime} \,\}.
\]
\end{thm}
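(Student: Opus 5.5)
The inclusion $\supseteq$ is already recorded in the Notation: each $\mathfrak{p}_S$ and each $\mathfrak{p}_{S,p}$ is prime. For the reverse inclusion, fix a prime ideal $\mathfrak{p}$ of $B^S(\cat{C})$. Let $D := B^S(\cat{C})/\mathfrak{p}$, an integral domain, and let $\rho \colon B^S(\cat{C}) \to D$ be the projection. The plan is to run the argument of Theorem~\ref{funtion and marcks} with $D$ as the target instead of $\mathbb{Z}$.

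First I would check that the proof of Theorem~\ref{funtion and marcks} only uses that the target is an integral domain. The ingredients are these. Since $\rho(1_{B^S(\cat{C})}) = 1_D \neq 0$ and $1_{B^S(\cat{C})}$ is a sum of classes of simple $\cat{C}$-sets, some simple $S$ has $\rho([S]) \neq 0$; choose one with $|S|$ maximal. For any simple $T$, every simple summand $U_i$ of $soc(S\times T)$ surjects onto $S$. So either $U_i \cong S$, by Corollary~\ref{| | =| |}, or $|U_i| > |S|$, in which case $\rho([U_i]) = 0$ by maximality. This gives
\[
\rho([S])\,\rho([T]) = n_T\,\rho([S]), \qquad n_T := |\{\, i \mid U_i \cong S \,\}|.
\]
Cancelling $\rho([S]) \neq 0$ in the domain $D$ gives $\rho([T]) = n_T \cdot 1_D$. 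The same computation for $\varphi_S$ gives $\varphi_S([T]) = n_T$, using $\varphi_S([S]) \neq 0$ because the identity lies in $Hom_{\cat{C}\text{-set}}(S,S)$. By additivity, $\rho = \iota \circ \varphi_S$, where $\iota \colon \mathbb{Z} \to D$ is the unique unital ring map.

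Next, $\mathfrak{p} = \ker \rho = \varphi_S^{-1}(\ker \iota)$. Since $D$ is a domain, $\ker\iota$ is a prime ideal of $\mathbb{Z}$, so it is either $0$ or $p\mathbb{Z}$ for a prime $p$. In the first case $\mathfrak{p} = \ker\varphi_S = \mathfrak{p}_S$. In the second case $\mathfrak{p} = \ker(\pi_p\circ\varphi_S) = \mathfrak{p}_{S,p}$. This completes the reverse inclusion.

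The main obstacle is the first step. One has to verify carefully that the maximality argument goes through in an arbitrary domain; the characteristic of $D$ plays no role, since only cancellation of a nonzero element is used. One also has to justify that each summand $U_i$ of the socle projects surjectively onto $S$, which follows from the surjectivity proposition for nonempty maps into a simple $\cat{C}$-set.
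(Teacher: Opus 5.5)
Your proposal is correct and is essentially the paper's proof. The paper applies Theorem~\ref{funtion and marcks} to the projection $B^S(\cat{C}) \to B^S(\cat{C})/\mathfrak{p}$ and then separates characteristic $0$ from characteristic $p$, which is exactly your dichotomy $\ker\iota = 0$ or $p\mathbb{Z}$. Your re-check of that theorem for an arbitrary domain target is in fact more careful than the paper's: the paper claims every summand $U_i$ of $soc(S\times T)$ has $|U_i| = |S|$, whereas you correctly allow $|U_i| > |S|$. Such $U_i$ are killed by $\rho$ by maximality, and by $\varphi_S$ because a morphism $S \to U_i$ would force $|U_i| \le |S|$; state this second point explicitly rather than calling it ``the same computation''.
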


\begin{proof}
Let $\mathfrak{p} \in \mathrm{Spec}(B^S(\cat{C}))$ .  Then the quotient $ R:=B^S(\cat{C})/\mathfrak{p}$
is an integral domain.
Hence, the canonical projection 
\[
f \colon B^S(\cat{C}) \longrightarrow R,
\]
is a ring homomorphism. 
By the Theorem $\ref{funtion and marcks}$,  we have that $ f = \varphi_S \circ 1_R$ 
for some simple $\cat{C}$-set $S$.

If $\mathrm{char}(B^S(\cat{C})/\mathfrak{p}) = 0$, then
$ \mathfrak{p} = \ker(f) = \ker(\varphi_S) = \mathfrak{p}_S.$

If $\mathrm{char}(B^S(\cat{C})/\mathfrak{p}) = p > 0$, then $p$ is prime and
\[
f \colon B^S(\cat{C}) \longrightarrow \mathbb{Z} \longrightarrow \mathbb{Z}/p\mathbb{Z},
\]
so that
\[
\mathfrak{p} = \ker(f) = \ker(\pi_p \circ \varphi_S) = \mathfrak{p}_{S,p}.
\]

Therefore, $\mathfrak{p} \in \{\, \mathfrak{p}_S,\ \mathfrak{p}_{S,p} \,\}.
 $
\end{proof}

\begin{lem}
Let $S$ and $T$ be simple $\cat{C}$-sets. Then
\[
\mathfrak{p}_S = \mathfrak{p}_T \quad \Longleftrightarrow \quad S \cong T.
\]
\end{lem}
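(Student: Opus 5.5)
The backward implication is immediate. If $S\cong T$, then composing with the isomorphism gives a bijection $Hom_{\cat{C}\text{-set}}(S,\Omega)\cong Hom_{\cat{C}\text{-set}}(T,\Omega)$ for every simple $\Omega$. Hence $\varphi_S=\varphi_T$ as functions on $B^S(\cat{C})$, and so $\mathfrak{p}_S=\mathfrak{p}_T$.

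For the forward implication, I will test the kernels on the elements $[S]$ and $[T]$ themselves. First, $\varphi_S([S])=|Hom_{\cat{C}\text{-set}}(S,S)|\geq 1$, since the identity lies there. So $[S]\notin\mathfrak{p}_S$. If $\mathfrak{p}_S=\mathfrak{p}_T$, then $[S]\notin\mathfrak{p}_T$, that is, $\varphi_T([S])\neq 0$, so there exists a natural transformation $T\to S$. Symmetrically, $\varphi_T([T])\neq 0$ gives $[T]\notin\mathfrak{p}_S$, so there exists a natural transformation $S\to T$.

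Both transformations are nonempty, because their sources are nonempty simple $\cat{C}$-sets. By Proposition~\ref{| | <| |}, a transformation $S\to T$ gives $|S|\geq|T|$, and a transformation $T\to S$ gives $|T|\geq|S|$. Therefore $|S|=|T|$, and Corollary~\ref{| | =| |} shows that the transformation $S\to T$ is an isomorphism. This proves $S\cong T$.

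There is no real obstacle. The one point to check is nonemptiness of the transformations, which Proposition~\ref{| | <| |} needs (an empty transformation gives no surjectivity). This is automatic here, since a simple $\cat{C}$-set is nonempty by convention.
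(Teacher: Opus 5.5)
Your proof is correct and is the paper's argument: use the equal kernels on $[S]$ and $[T]$ to get natural transformations in both directions, compare orders via the inequality for maps into a simple $\cat{C}$-set, and conclude that the map $S\to T$ is an isomorphism because the orders agree. You are slightly more explicit than the paper in two places. You name the corollary that gives the isomorphism, and you check that the transformations are nonempty, which holds because simple $\cat{C}$-sets are nonempty.
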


\begin{proof}
$(\Rightarrow)$ Suppose that $\mathfrak{p}_S=\mathfrak{p}_T$. Then
$\ker(\varphi_S)=\ker(\varphi_T)$.
Since $T$ is not an element of  $\ker(\varphi_T)$, it follows that $T\notin \ker(\varphi_S)$, and hence
$\varphi_S(T)\neq 0$.
Therefore, there exists a nonzero natural transformation
$\eta:S\to T$.
By Proposition \ref{| | <| |}, this implies that $|T|\leq |S|$. By symmetry, we also obtain $|S|\leq |T|$. Hence $|S|=|T|$. Since $S$ and $T$ are simple $\cat{C}$-sets, it follows that $\eta$ is an isomorphism. Therefore, $S\cong T$.

$(\Leftarrow)$ Conversely, if $S\cong T$, then clearly $\varphi_S=\varphi_T$. Therefore,
$\mathfrak{p}_S=\mathfrak{p}_T$.
\end{proof}

\section{Decomposition of  $B^S(\cat{C})$}\label{descompositon}

In this section, we study a decomposition of the ring $B^S(\cat{C})$. To do so, we decompose the category $\cat{C}$ into subcategories, which preserve the information of simple $\cat{C}$-sets under restriction.

\begin{lem} \label{supp si sj}
Let $1_{\cat{C}} \in B(\cat{C})$ be such that $soc(1_\cat{C}) = \sum_{i=1}^n S_i$,
where each $S_i$ is simple. Then
\[
    supp(S_i) \cap supp(S_j) = \emptyset, \quad \text{for } i \neq j.
\]
\end{lem}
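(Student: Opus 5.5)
The plan is to exploit the fact that $1_{\cat{C}}$ has exactly one element at each object. Each $S_i$ is a $\cat{C}$-subset of $soc(1_\cat{C}) \subseteq 1_\cat{C}$, so for every $x \in \obj{C}$ we have $S_i(x) \subseteq 1_\cat{C}(x) = \{\cdot\}$. Hence $S_i(x)$ is either empty or equal to $\{\cdot\}$, and in particular $x \in supp(S_i)$ if and only if $S_i(x) = \{\cdot\}$.

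Now suppose, for a contradiction, that some $x$ lies in $supp(S_i) \cap supp(S_j)$ with $i \neq j$. Then $S_i(x) = S_j(x) = \{\cdot\}$, so $S_i(x) \cap S_j(x) \neq \emptyset$. Both $S_i$ and $S_j$ are subfunctors of the same $\cat{C}$-set $1_\cat{C}$, so their actions agree on common elements, and the intersection $S_i \cap S_j$ is a well-defined $\cat{C}$-subset of both. By the observation in the Preliminaries, two distinct simple $\cat{C}$-subsets of a $\cat{C}$-set have disjoint evaluations at every object, since otherwise the intersection would be a nonempty proper subfunctor of one of them. Alternatively, one can argue directly with Lemma~\ref{orbita simple}: choosing $a = \cdot \in S_i(x) \cap S_j(x)$, we get
\[
S_i = \cat{C}^{1_\cat{C}} \cdot a = S_j,
\]
because both are generated inside $1_\cat{C}$ by the same element. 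This contradicts $i \neq j$.

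The main point requiring care is the bookkeeping behind the expression $soc(1_\cat{C}) = \sum_i S_i$. This sum must be read as the disjoint decomposition of $soc(1_\cat{C})$ into its distinct simple $\cat{C}$-subsets, as in the Remark following Proposition~\ref{nat simple to soc}, rather than as an abstract sum of isomorphism classes in $B^S(\cat{C})$. We also use that each $S_i$ genuinely sits inside $1_\cat{C}$. Once the $S_i$ are identified with distinct subfunctors of $1_\cat{C}$, the one-point nature of $1_\cat{C}(x)$ forces any two of them that share an object in their supports to share an element there, and hence to coincide.
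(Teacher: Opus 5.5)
Your proof is correct and follows essentially the paper's argument. The paper also uses $S_i(x)\subseteq 1_{\cat{C}}(x)=\{\cdot\}$, and then shows that a common support object $x$ forces $S_i(y)=S_i(\operatorname{Hom}_{\cat{C}}(x,y))(\cdot)=S_j(y)$ for every $y$, which is exactly your ``alternatively'' route via Lemma~\ref{orbita simple}; your remark that the $S_i$ must be read as distinct subfunctors of $1_{\cat{C}}$ makes explicit a point the paper treats only loosely.
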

\begin{proof}
 We note that each $S_i$ is a subfunctor of $1_{\cat{C}}$. Hence, for every
$x \in supp(S_i)$, we have $S_i(x) = \{\, \cdot \,\}$. Since
\[
    soc(1_\cat{C})(x) = \sum_{i=1}^n S_i(x) = \{\, \cdot \,\},
\]
it follows that $S_i \neq S_j$ for $i \neq j$.

Now, suppose that $x \in supp(S_i) \cap supp(S_j)$ with $i \neq j$.
Then
\[
    S_i(x) = \{\, \cdot \,\} = S_j(x).
\]
For any $y \in \cat{C}_0$, we have
\begin{align*}
   S_i(y)
    = S_i\bigl(Hom_{\cat{C}}(x,y)\bigr)(\cdot)
    = S_j\bigl(Hom_{\cat{C}}(x,y)\bigr)(\cdot)
    = S_j(y) 
\end{align*}
Therefore $S_i = S_j$, which is a contradiction.
\end{proof}
\begin{defi}
Let $1_\cat{C} \in B(\cat{C})$ be the unit $\cat{C}$-set. We call the simple $\cat{C}$-sets that appear in the decomposition of $soc(1_{\cat{C}})$ the \emph{minimal simple $\cat{C}$-sets}.
\end{defi}

\begin{defi} \label{cat ci}
Let $1_\cat{C} \in B(\cat{C})$ and suppose that $soc(1_\cat{C}) = \sum_{i=1}^n S_i.$ 
The elements of $\cat{C}_0 \setminus \bigsqcup_{i=1}^n supp(S_i)$ are called  \emph{pseudo-sources}.  
For each $i$, we denote by $\cat{C}_i$ the full subcategory of $\cat{C}$
whose object set is $ (\cat{C}_i)_0 = supp(S_i).$
\end{defi}
\begin{cor}
Let $x \in (\cat{C}_i)_0$ and $y \in (\cat{C}_j)_0$ with $i \neq j$. Then
$  Hom_{\cat{C}}(x,y) = \emptyset.$
\end{cor}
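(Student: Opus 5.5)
Suppose for contradiction that there is a morphism $f \in Hom_{\cat{C}}(x,y)$ with $x \in (\cat{C}_i)_0 = supp(S_i)$, $y \in (\cat{C}_j)_0 = supp(S_j)$ and $i \neq j$. The plan is to use that each $S_i$ is a subfunctor of $1_{\cat{C}}$, so that $S_i(z)$ is either $\emptyset$ or $\{\cdot\}$ for every object $z$. We then combine this with the disjointness of supports from Lemma~\ref{supp si sj}.

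First, since $x \in supp(S_i)$, we have $S_i(x) = \{\cdot\}$. Because $S_i$ is a $\cat{C}$-subset of $1_{\cat{C}}$, the map $S_i(f) \colon S_i(x) \to S_i(y)$ is the restriction of $1_{\cat{C}}(f)$. In particular, $S_i(f)(\cdot)$ is an element of $S_i(y)$, so $S_i(y) \neq \emptyset$ and hence $y \in supp(S_i)$. (Equivalently, by Lemma~\ref{orbita simple}, $S_i = \cat{C}^{S_i}\cdot(\cdot)$ with the generator at $x$, so $S_i(y) = \{ \beta\cdot(\cdot) \mid \beta \in Hom_{\cat{C}}(x,y)\}$, which is nonempty because $f$ exists.)

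Second, we also have $y \in supp(S_j)$. Then $y \in supp(S_i) \cap supp(S_j)$ with $i \neq j$, which contradicts Lemma~\ref{supp si sj}. Therefore $Hom_{\cat{C}}(x,y) = \emptyset$.

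There is essentially no obstacle here. The only point requiring care is that a subfunctor must be closed under the action of every morphism, which is what forces $S_i(y)$ to be nonempty. Everything else is a direct appeal to the disjointness of supports in Lemma~\ref{supp si sj}.
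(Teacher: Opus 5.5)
Your proof is correct and is essentially the paper's argument. The paper just says the corollary follows from Lemma~\ref{supp si sj}, and you have supplied the omitted step: closure of the subfunctor $S_i \subseteq 1_{\cat{C}}$ under a morphism $x \to y$ would put $y$ in $supp(S_i) \cap supp(S_j)$, contradicting that lemma.
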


\begin{proof}
This follows from Lemma~\ref{supp si sj}.
\end{proof}

Let $T$ be a $\cat{C}$-set. Then there exists a morphism $\eta \colon T \to 1_{\cat{C}}$. If $T$ is simple. Thus, $\eta$ factors through $soc(1_{\cat{C}})$.
 \begin{rem}
Let $T$ be a simple $\cat{C}$-set. Then there exists a minimal simple $\cat{C}$-set $S_i$ such that there is a non-empty natural transformation $\eta_1:T\longrightarrow S_i.$
 \end{rem}
 \begin{proof}
Note that for every $x \notin supp(S_i)$, the morphism
\[
\eta_{1,x} : T(x) \to S_i(x)
\]
has codomain $\emptyset$, since $S_i(x)=\emptyset$. Hence $T(x)=\emptyset$.

Suppose there exists $j \neq i$ and a morphism $\eta_2 : T \to S_j$.
For every $x \in supp(S_i)$, which is disjoint from $supp(S_j)$, we have
$S_j(x)=\emptyset$. Therefore, the component
\[
\eta_{2,x} : T(x) \to S_j(x)
\]
is the empty function, and consequently $T(x)=\emptyset$ for all
$x \in supp(S_i)$.

Thus, $T(x)=\emptyset$ for all $x \in \cat{C}_0$, which implies $T=\emptyset$.
This is a contradiction.
\end{proof}
Let $T$ be a $\cat{C}$-set then there exits $\eta : T\to 1$, if $T$ is a simple,   then  $\eta : T\to soc(1_\cat{C})$. In consequence $Im(\eta (S_T))\leq S_i$
Then we have for every $T$ simple $\cat{C}$-set, there exist only $S_i$  minimal such that $T\to S_i$, then $supp(T)\subseteq supp(S_i)$.

 \begin{thm}\label{descom de B^S}
 Let $1_\cat{C}\in B(\cat{C})$ such that $soc(1_\cat{C}) = \sum_{i=1}^n S_i.$  Then 
 \begin{align*}
 B^S(\cat{C})\cong\prod_{i=1}^n B^s({\cat{C}_i})
 \end{align*}
 where each $\cat{C}_i$ is the full subcategory of $\cat{C}$ defined in Definition~\ref{cat ci}.
 \end{thm}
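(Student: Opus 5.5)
The plan is to build the isomorphism from restriction and extension by zero, and then check that it respects the product $\star$. By the discussion preceding the statement, every simple $\cat{C}$-set $T$ admits a non-empty natural transformation to exactly one minimal simple $\cat{C}$-set $S_i$. Consequently $supp(T)\subseteq supp(S_i)=(\cat{C}_i)_0$, so $T$ is determined by its restriction $T|_{\cat{C}_i}$ to the full subcategory $\cat{C}_i$.

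First, I would show that $T|_{\cat{C}_i}$ is a simple $\cat{C}_i$-set. Given $a\in T(x)$ and $b\in T(y)$ with $x,y\in(\cat{C}_i)_0$, Lemma~\ref{orbita simple} gives $\alpha\in Hom_{\cat{C}}(x,y)$ with $\alpha\cdot a=b$, and $\alpha$ is a morphism of the full subcategory $\cat{C}_i$. Conversely, given a simple $\cat{C}_i$-set $U$, I extend it by zero: set $\widetilde U(z)=\emptyset$ for $z\notin(\cat{C}_i)_0$.

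To see that $\widetilde U$ is a functor, I need that no morphism of $\cat{C}$ leaves $(\cat{C}_i)_0$. If $f\colon x\to z$ with $x\in supp(S_i)$, then $S_i(z)\supseteq\{S_i(f)(\cdot)\}\neq\emptyset$, so $z\in supp(S_i)$; that is, supports of subfunctors are closed under outgoing morphisms. Hence the only morphisms out of $(\cat{C}_i)_0$ land in $(\cat{C}_i)_0$, while morphisms from outside act on empty sets. So $\widetilde U$ is a $\cat{C}$-set, and it is simple because any $\cat{C}$-subset restricts to a $\cat{C}_i$-subset of $U$.

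These two constructions are mutually inverse on isomorphism classes and commute with $\sqcup$. They therefore give a group isomorphism
\[
\Phi\colon B^S(\cat{C})\longrightarrow\prod_{i=1}^n B^S(\cat{C}_i),\qquad [T]\longmapsto\bigl([T|_{\cat{C}_1}],\dots,[T|_{\cat{C}_n}]\bigr),
\]
in which a simple $T$ of type $i$ has zero in every coordinate except the $i$-th.

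The main step is multiplicativity. Let $T$ be simple of type $i$ and $R$ simple of type $j$. If $i\neq j$, then $supp(T\times R)\subseteq supp(T)\cap supp(R)=\emptyset$, so $[T]\star[R]=0$, which matches the product in $\prod_i B^S(\cat{C}_i)$. If $i=j$, then $T\times R$ is supported in $(\cat{C}_i)_0$ and its restriction to $\cat{C}_i$ is $T|_{\cat{C}_i}\times R|_{\cat{C}_i}$. The simple $\cat{C}$-subsets of a $\cat{C}$-set supported in $(\cat{C}_i)_0$ correspond exactly, under the bijection above, to simple $\cat{C}_i$-subsets of its restriction. Hence
\[
soc(T\times R)|_{\cat{C}_i}=soc_{\cat{C}_i}\bigl(T|_{\cat{C}_i}\times R|_{\cat{C}_i}\bigr).
\]

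For the unit, $\Phi([soc(1_\cat{C})])=\bigl([S_1|_{\cat{C}_1}],\dots,[S_n|_{\cat{C}_n}]\bigr)$. Each $S_i|_{\cat{C}_i}=1_{\cat{C}_i}$ because $S_i(x)=\{\cdot\}$ on its support. Moreover $1_{\cat{C}_i}$ is already simple, since $S_i$ is generated by any point. So $soc(1_{\cat{C}_i})=1_{\cat{C}_i}$ is the unit of $B^S(\cat{C}_i)$, and $\Phi$ preserves the identity.

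I expect the main obstacle to be the extension-by-zero step: one must justify that no morphism leaves $supp(S_i)$, so that $\widetilde U$ is functorial. One must also justify that the simple subsets of a $\cat{C}$-set supported in $(\cat{C}_i)_0$ are intrinsic to $\cat{C}_i$. Both reduce to the closure of supports of subfunctors under outgoing morphisms, together with Lemma~\ref{orbita simple}.
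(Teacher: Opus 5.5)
Your proof is correct, but it is organized differently from the paper's.

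The paper works inside $B^S(\cat{C})$. It checks that $[S_i]\star[S_j]=0$ for $i\neq j$, that $[S_i]\star[S_i]=[S_i]$, and that $\sum_i[S_i]=1$, so the ring splits as $\prod_i B^S(\cat{C})[S_i]$ at once. It then observes that for simple $T$ one has $[T]\star[S_i]=[T]$ or $0$, according to whether $supp(T)\subseteq supp(S_i)$. Finally it identifies the $i$-th factor with $B^S(\cat{C}_i)$ without further argument. You instead construct $\Phi$ directly by restriction, with extension by zero as its inverse, and verify additivity, multiplicativity and the unit by hand.

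The idempotent route gives the product decomposition almost for free, but it leaves implicit exactly what your argument supplies. First, simple $\cat{C}$-sets supported in $supp(S_i)$ correspond bijectively to simple $\cat{C}_i$-sets. Second, this correspondence sends $soc(T\times R)$ to $soc_{\cat{C}_i}(T|_{\cat{C}_i}\times R|_{\cat{C}_i})$. Together these show that $B^S(\cat{C})[S_i]\cong B^S(\cat{C}_i)$ as rings, not merely as groups. Your observation that $supp(S_i)$ is closed under outgoing morphisms is what makes extension by zero functorial. The paper's corollary only rules out morphisms between different $\cat{C}_i$, not morphisms from $(\cat{C}_i)_0$ to pseudo-sources.

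The paper also calls the $[S_i]$ primitive idempotents. This is not needed for the decomposition, and it fails in general. For example, take $\cat{C}=\cdot_G$ with $G$ non-solvable: then $n=1$, but $B^S(\cat{C})=B(G)$ has nontrivial idempotents by Dress's theorem. Your argument does not rely on this claim.
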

\begin{proof}
    Let $S_i$ and $S_j$ be minimal  simple $\cat{C}$-set with $i \neq j$. We note that $ S_i \times S_j = \emptyset,$
and therefore $ [S_i]\cdot [S_j] = [soc(S_i \times S_j)] =[\emptyset].
$ 
On the other hand, for every $x \in supp(S_i)$ we have
\[
(S_i \times S_i)(x)
= \{\cdot\} \times \{\cdot\}
\cong \{\cdot\}
= S_i(x),
\]
and for every $x \notin supp(S_i)$,
\[
(S_i \times S_i)(x) = \emptyset = S_i(x).
\]
Hence, $soc(S_i \times S_i) \cong S_i,$ 
and thus $[S_i]\cdot [S_i] = [S_i].$
Since $ 1_{B^S(\cat{C})}=\sum_{i=1}^n [S_i]$, it follows that the elements
$[S_i]$, for $i=1,\dots,n$, are orthogonal primitive idempotents. Therefore,
\[
B^S(\cat{C}) = \bigoplus_{i=1}^n B^S(\cat{C})[S_i].
\]
We now prove that
\[
B^S(\cat{C})[S_i] = B^S(\cat{C}_i).
\]
Let $T$ be a simple $\cat{C}$-set. If $supp(T) \nsubseteq supp(S_i)$, then $T \times S_i = \emptyset.$
Otherwise, if $supp(T) \subseteq supp(S_i)$, we have $T \times S_i \cong T.$ It follows that $[T]\cdot [S_i] = [T],$
and hence $[T] \in B^S(\cat{C}_i)$.

\end{proof}

Let $S_i$ be a simple minimal $\cat{C}$-set. Then we have
\[
\cat{C}^{S_i} \cdot \{\cdot\}_x(y)
= S_i(Hom_{\cat{C}}(x,y))
= S_i(y),
\]
for all $x,y \in supp(S_i)$. Consequently,
\[
Hom_{\cat{C}_i}(x,y) \neq \emptyset
\quad \text{for all } x,y \in (\cat{C}_i)_0.
\]

By the decomposition of $B^S(\cat{C})$ and the condition on the morphism
sets $Hom_{\cat{C}_i}(-,-)$, we are led to the following definition.
\begin{defi}
A finite connected category $\cat{C}$ is said to be \emph{strongly connected} if
$Hom_{\cat{C}}(x,y)\neq\emptyset$
for all $x,y\in\cat{C}_0$.
\end{defi}
The categories $\cat{C}_i$ introduced in Definition~\ref{cat ci}, and 
groupoids, are  strongly connected categories. We now introduce another example of a strongly connected category that will be used extensively in this article.
\begin{exam}\label{the example}
Let 
$ M=\{1,\rho,e,\tau\} $ 
be a monoid with identity element $1$, whose multiplication is determined by the following relations:
\[
\rho^2=1, \qquad ex=e, \qquad \tau x=\tau \quad \text{for all } x\in M.
\]
Equivalently, the multiplication in $M$ is given by the following the  Cayley table:
\[
\begin{array}{c|cccc}
\cdot & 1 & \rho & e & \tau \\
\hline
1     & 1 & \rho & e & \tau \\
\rho  & \rho & 1 & \tau & e \\
e     & e & e & e & e \\
\tau  & \tau & \tau & \tau & \tau
\end{array}
\]
Let $\cat{D}$ be the category defined as follows.

\begin{itemize}

\item The set of objects of $\cat{D}$ is $ \cat{D}_0=\{a,b\}. $

\item For $x,y\in \{a,b\}$ we define the sets of morphisms as disjoint sets
\[
Hom_{\cat{D}}(x,y)=
\begin{cases}
\{(m,a,b)\mid m\in \{ e,\tau\} \} & \text{if } (x,y)=(a,b),\\[4pt]
\{(m,b,b)\mid m\in M\} & \text{if } (x,y)=(b,b),\\[4pt]
\{(e,a,a)\} & \text{if } (x,y)=(a,a),\\[4pt]
\{(e,b,a)\} & \text{if } (x,y)=(b,a).
\end{cases}
\]

\item The composition of morphisms is defined as follows.  
Let $m,n\in M$. Whenever the composition is defined, we set
\[
(m,y,z)\circ(n,x,y)=(mn,x,z),
\]
where $mn$ denotes the product in the monoid $M$. 

\item The identity morphisms are
\[
\mathrm{id}_a=(e,a,a), \qquad \mathrm{id}_b=(1,b,b).
\]
Since,  $(\tau,a,b)\circ (e,a,a)=(\tau,a,b)$,  $(e,a,b)\circ (e,a,a)=(e,a,b),$ and $(e,a,a)\circ (e,b,a)=(e,b,a)$
\end{itemize}

Associativity of composition follows from the associativity of the multiplication in $M$.
\end{exam}
\section{Strongly connected categories}\label{strongly connected}
If we want to study the Burnside ring of simple $\cat{C}$-sets, $B^S(\cat{C})$,  then by Theorem~\ref{descom de B^S}, it suffices to study the ring $B^S(\cat{C})$ for  strongly connected categories. Hence, from now on, all categories we consider will have strongly connected.

The first observation we make is that. The category  $\cat{C}$ is a strongly connected if and only if the $\cat{C}$-set $1_{\cat{C}}$ is simple. Furthermore, if $S$ is a simple $\cat{C}$-set, then $supp(S)$ is $\cat{C}_0$.
\begin{lem}
Let $\cat{C}$ be a strongly connected category. Then there exists a unique simple $\cat{C}$-set $M$, maximal with respect to the order, such that for every simple $\cat{C}$-set $S$ there exists a morphism $M \to S$.
\end{lem}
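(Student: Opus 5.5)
The plan is to build $M$ as a simple $\cat{C}$-subset of the product of all simple $\cat{C}$-sets, and then get maximality and uniqueness from Proposition~\ref{| | <| |} and Corollary~\ref{| | =| |}.

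\textbf{Existence.} By the corollary following Theorem~\ref{simple ang quotinet}, there are only finitely many isomorphism classes of simple $\cat{C}$-sets. Choose representatives $S_1,\dots,S_k$ and set $\Pi := S_1\times\cdots\times S_k$. Since $\cat{C}$ is strongly connected, the observation at the start of this section gives $supp(S_j)=\cat{C}_0$ for each $j$. Hence $\Pi(x)\neq\emptyset$ for every $x\in\cat{C}_0$, so $\Pi$ is a nonempty finite $\cat{C}$-set.

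Next I take a nonempty $\cat{C}$-subset $M\subseteq \Pi$ for which $|M|$ is minimal among nonempty $\cat{C}$-subsets of $\Pi$. Such an $M$ exists because $\Pi$ is finite and has only finitely many subfunctors. Any nonempty proper $\cat{C}$-subset of $M$ would also be a $\cat{C}$-subset of $\Pi$ of strictly smaller order, contradicting minimality. So $M$ is simple.

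Composing the inclusion $M\hookrightarrow\Pi$ with the $j$-th projection gives a natural transformation $M\to S_j$ for each $j$. Precomposing with isomorphisms then gives a morphism $M\to S$ for every simple $\cat{C}$-set $S$.

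\textbf{Maximality.} For every simple $S$ there is a morphism $M\to S$, so Proposition~\ref{| | <| |} gives $|M|\geq |S|$. Thus $M$ has maximal order among simple $\cat{C}$-sets.

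\textbf{Uniqueness.} Suppose $M'$ is another simple $\cat{C}$-set with a morphism to every simple $\cat{C}$-set. Then there are morphisms $M\to M'$ and $M'\to M$. Proposition~\ref{| | <| |} gives $|M|\ge|M'|$ and $|M'|\ge|M|$, so $|M|=|M'|$. Corollary~\ref{| | =| |} then shows that the morphism $M\to M'$ is an isomorphism.

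The main point needing care is the existence step: producing a simple subset inside $\Pi$ and checking that $\Pi$ is nonempty. Nonemptiness is exactly where strong connectedness enters, through the full support of simple $\cat{C}$-sets. Without it the product of simples with disjoint supports could be empty, as for the minimal simples $S_i\times S_j=\emptyset$ in Theorem~\ref{descom de B^S}. The existence of a simple subset is handled by the minimal-order argument above.

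Uniqueness should be read as uniqueness up to isomorphism of a simple $\cat{C}$-set with the universal property. "Maximal" is then a consequence of that property, not an extra hypothesis.
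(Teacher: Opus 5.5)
Your proof is correct, but it runs in the opposite direction from the paper's.

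The paper first fixes a simple $\cat{C}$-set $M$ of maximal order. For each simple $T$ it picks a simple $U\subseteq \mathrm{soc}(M\times T)$. The projection $U\to M$ gives $|U|\geq |M|$, maximality forces $U\cong M$, and the other projection yields $M\cong U\to T$. Uniqueness is the same pairwise trick, applied to a simple $W\subseteq S_1\times S_2$ for two maximal $S_1,S_2$.

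You instead build $M$ in one step as a minimal nonempty subfunctor of $\prod_j S_j$. The mapping property then comes for free from the projections, and maximality is deduced afterwards from Proposition~\ref{| | <| |} rather than assumed.

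Your route makes explicit two facts the paper uses without comment:
\begin{enumerate}
\item products of simple $\cat{C}$-sets are nonempty, because simples have full support in a strongly connected category;
\item a nonempty finite $\cat{C}$-set contains a simple $\cat{C}$-subset, namely any nonempty subfunctor of minimal order.
\end{enumerate}
The paper's route only needs products of two simples, and it shows directly that every simple $\cat{C}$-set of maximal order has the mapping property.

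Your argument also gives the paper's form of uniqueness immediately. If $M'$ is any simple $\cat{C}$-set of maximal order, then $|M'|=|M|$, and the morphism $M\to M'$ is an isomorphism by Corollary~\ref{| | =| |}. So it does not matter which reading of ``unique'' is intended.
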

\begin{proof}
First, we show that the maximal simple $\cat{C}$-set is unique up to isomorphism.  Let $S_1$ and $S_2$ be simple $\mathcal{C}$-sets, each maximal with respect to $|S_i|$, and let $W$ be a simple $\mathcal{C}$-set such that $W \leq \mathrm{soc}(S_1 \times S_2)\leq S_1 \times S_2$. Then
\[
\begin{tikzcd}
& S_2 \\
W \arrow[ru] \arrow[r] \arrow[rd] & S_1 \times S_2 \arrow[u] \arrow[d] \\
& S_1
\end{tikzcd}
\]
Thus $W \to S_i$ is an isomorphism. Hence $S_1 \cong S_2 \cong W$.

Let $T$ be a simple $\mathcal{C}$-set, and let $M$ be a simple $\mathcal{C}$-set maximal with respect to $|M|$. Let $U \leq \mathrm{soc}(M \times T)$, a simple $\mathcal{C}$-set, we have the following diagram
\[
\begin{tikzcd}
& T \\
U \arrow[ru] \arrow[r] \arrow[rd] & M \times T \arrow[u] \arrow[d] \\
& M
\end{tikzcd}
\]
Tn particular, $|M| \leq |U|$. By maximality of $M$, it follows that $U \cong M$. Consequently, there exists a morphism $M \to T$.
    
\end{proof}
\begin{prop} \label{maximal in hom }
Let $M$ be a maximal simple $\cat{C}$-set and let $x \in \cat{C}_0$. Then
\[
\mathrm{soc}(\mathrm{Hom}_{\cat{C}}(x,-)) \cong \bigsqcup_{i=1}^n M
\]
for some $n \in \mathbb{N}$.
\end{prop}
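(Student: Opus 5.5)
Every simple $\cat{C}$-subset $U$ of $\mathrm{Hom}_{\cat{C}}(x,-)$ should be isomorphic to $M$. Granting this, the Remark after Proposition~\ref{nat simple to soc} writes $\mathrm{soc}(\mathrm{Hom}_{\cat{C}}(x,-))$ as a disjoint union of simple subsets, each $\cong M$, and the number $n$ of them is finite because $\mathrm{Hom}_{\cat{C}}(x,-)$ is finite. We also need $n\geq 1$, so that the socle is nonempty. Since $\cat{C}$ is strongly connected, $1_{\cat{C}}$ is simple, and the orbit $\cat{C}^{\Omega}\cdot \mathrm{id}_x$ is all of $\mathrm{Hom}_{\cat{C}}(x,-)$. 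A nonempty simple subset still has to be exhibited. I would pick a subset $\cat{C}^{\Omega}\cdot f$ of minimal order among all generated subsets; any proper nonempty subset of it contains a smaller generated subset, so it is simple. Alternatively one can appeal to the existence of $M$: $M$ is a quotient of some $\mathrm{Hom}_{\cat{C}}(y,-)$, and a copy of $M$ can then be found inside $\mathrm{Hom}_{\cat{C}}(x,-)$.

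To show $U\cong M$, I would compare orders and use the maximality of $M$. By the preceding Lemma there is a morphism $M\to U$, which is nonempty and therefore surjective on every component by the Proposition on surjectivity; Proposition~\ref{| | <| |} then gives $|M|\geq |U|$. The reverse inequality needs a morphism $U\to M$. Pick $f\in U(y)$ with $f\in\mathrm{Hom}_{\cat{C}}(x,y)$. By Lemma~\ref{orbita simple}, $U=\cat{C}^{\Omega}\cdot f$, so $U(z)=\{g\circ f\mid g\in \mathrm{Hom}_{\cat{C}}(y,z)\}$. Choose $m\in M(x)$, which is nonempty because $\mathrm{supp}(M)=\cat{C}_0$. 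Define $\eta_z(h)=M(h)(m)$ on $\mathrm{Hom}_{\cat{C}}(x,-)$; this is natural, and its restriction to $U$ is a nonempty natural transformation $U\to M$. Proposition~\ref{| | <| |} now gives $|U|\geq |M|$. Hence $|U|=|M|$, and Corollary~\ref{| | =| |} makes $M\to U$ an isomorphism.

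The key step is producing the map $U\to M$. This comes from Yoneda: $\mathrm{Hom}_{\cat{C}}(x,-)$ maps to any $\cat{C}$-set that is nonempty at $x$, and strong connectivity guarantees $M(x)\neq\emptyset$. So the only real obstacle is the existence of a simple subset, and the minimal-orbit argument settles it.
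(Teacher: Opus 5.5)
Your argument is correct and is essentially the paper's: restrict the Yoneda map $\mathrm{Hom}_{\cat{C}}(x,-)\to M$, $h\mapsto M(h)(m)$, to a simple $\cat{C}$-subset $U$ to get $|U|\geq |M|$, use maximality to force equality, and conclude with the corollary on morphisms between simple $\cat{C}$-sets of equal order. There are two differences. The paper reads off $|U|\leq|M|$ directly from the maximality of $|M|$, so your detour through the preceding Lemma and the surjectivity proposition is valid but unnecessary. The paper also never checks that $n\geq 1$, and your minimal-order generated-subset argument correctly supplies this (the ``alternatively'' route via $M$ being a quotient of some $\mathrm{Hom}_{\cat{C}}(y,-)$ is not justified as written, but you do not need it).
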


\begin{proof}
Let $a \in M(x)$. Define a natural transformation
\[
\theta \colon \mathrm{Hom}_{\cat{C}}(x,-) \longrightarrow M
\]
by setting, for each $y \in \cat{C}_0$,
\[
\theta_y(\alpha)=M(\alpha)(a), \quad \text{for all } \alpha \in \mathrm{Hom}_{\cat{C}}(x,y).
\]

Let $S$ be a simple $\cat{C}$-subset of $\mathrm{Hom}_{\cat{C}}(x,-)$. Then the restriction $\theta|_S \colon S \to M$ is a nonzero morphism of $\cat{C}$-sets. By Lemma~\ref{| | <| |}, we have $|M| \leq |S|$. Since $M$ is maximal with respect to its cardinality, it follows that $|M| = |S|$, and hence $S \cong M$.
Therefore, every simple $\cat{C}$-subset of $\mathrm{Hom}_{\cat{C}}(x,-)$ is isomorphic to $M$. Since $|\mathrm{Hom}_{\cat{C}}(x,-)|$ is finite, it contains only finitely many simple $\cat{C}$-subsets. Thus,
\[
\mathrm{soc}(\mathrm{Hom}_{\cat{C}}(x,-)) \cong \bigsqcup_{i=1}^n M
\]
for some $n \in \mathbb{N}$.
\end{proof}

\begin{exam}
Continuing with Example~\ref{the example}, we compute the representable functors.

Let $\cat{D}$ be the category defined above. We compute the representable functors
\[
\mathrm{Hom}_{\cat{D}}(a,-), \qquad \mathrm{Hom}_{\cat{D}}(b,-) \colon \cat{D} \to \mathbf{Set}.
\]

\medskip

\textbf{The functor $\mathrm{Hom}_{\cat{D}}(a,-)$.}

By definition,
\[
\mathrm{Hom}_{\cat{D}}(a,-)(x)=\mathrm{Hom}_{\cat{D}}(a,x).
\]

Hence
\[
\mathrm{Hom}_{\cat{D}}(a,-)(a)=\mathrm{Hom}_{\cat{D}}(a,a)=\{(e,a,a)\},
\]
and
\[
\mathrm{Hom}_{\cat{D}}(a,-)(b)=\mathrm{Hom}_{\cat{D}}(a,b)
=\{(e,a,b),(\tau,a,b)\}.
\]

Observe that
\[
M\cdot \tau = M\cdot e = \{\tau,e\}.
\]

Moreover,
\[
(\tau,a,b)\circ (e,a,a)=(\tau,a,b),
\]
and
\[
(e,b,a)\circ (\tau,a,b)=(e,a,a), \qquad
(e,b,a)\circ (e,a,b)=(e,a,a).
\]

Let $T$ be a nonempty sub-$\cat{D}$-set of $\mathrm{Hom}_{\cat{D}}(a,-)$. Since
\[
\mathrm{Hom}_{\cat{D}}(a,a)=\{(e,a,a)\},
\]
we must have $(e,a,a)\in T(a)$.

Applying the morphisms $a\to b$, we obtain
\[
(\tau,a,b)\circ (e,a,a)=(\tau,a,b), \qquad
(e,a,b)\circ (e,a,a)=(e,a,b),
\]
hence
\[
(e,a,b),(\tau,a,b)\in T(b).
\]

Therefore $T=\mathrm{Hom}_{\cat{D}}(a,-)$. Consequently, $\mathrm{Hom}_{\cat{D}}(a,-)$ has no nontrivial sub-$\cat{D}$-sets and hence it is simple.

\medskip

\textbf{The functor $\mathrm{Hom}_{\cat{D}}(b,-)$.}

Similarly,
\[
\mathrm{Hom}_{\cat{D}}(b,-)(x)=\mathrm{Hom}_{\cat{D}}(b,x).
\]

Hence
\[
\mathrm{Hom}_{\cat{D}}(b,-)(a)=\mathrm{Hom}_{\cat{D}}(b,a)=\{(e,b,a)\},
\]
and
\[
\mathrm{Hom}_{\cat{D}}(b,-)(b)=\mathrm{Hom}_{\cat{D}}(b,b)
=\{(m,b,b)\mid m\in M\}.
\]

Observe that $M\cdot \tau = M\cdot e = \{\tau,e\}$. 

Consider the $\cat{D}$-subset generated by $(\tau,b,b)$ in $\mathrm{Hom}_{\cat{D}}(b,-)$. Since
\[
M\cdot \tau = \{\tau,e\},
\]
we obtain
\[
\cat{D}^{\mathrm{Hom}(b,-)}\cdot (\tau,b,b)
=
\cat{D}^{\mathrm{Hom}(b,-)}\cdot (e,b,b).
\]

This $\cat{D}$-subset is given by
\[
\cat{D}^{\mathrm{Hom}(b,-)}\cdot (\tau,b,b)(a)
=\{(e,b,a)\},
\]
and
\[
\cat{D}^{\mathrm{Hom}(b,-)}\cdot (\tau,b,b)(b)
=\{(\tau,b,b),(e,b,b)\}.
\]

Therefore, this defines a nontrivial proper sub-$\cat{D}$-set of $\mathrm{Hom}_{\cat{D}}(b,-)$. Consequently, $ 
\mathrm{Hom}_{\cat{D}}(b,-)
$ 
is not a simple $\cat{D}$-set. By Proposition~\ref{maximal in hom } the $\cat{D}$-set $\mathrm{Hom}_{\cat{D}}(a,-)$ is the maximal simple $\cat{D}$-set. Moreover, it is the unique nontrivial  simple $\cat{D}$-set, since one evaluation has one element while the other has two elements. We also have
\begin{align*}
    \mathrm{Hom}_{\cat{D}}(a,-)\cong \mathrm{Hom}_{\cat{D}}(a,-)\star \mathrm{Hom}_{\cat{D}}(a,-)
\end{align*}
since the functor generated by
\[
((e,a,a),(e,a,a))
\in
\mathrm{Hom}_{\cat{D}}(a,-)\times \mathrm{Hom}_{\cat{D}}(a,-)(a)
\]
is the unique simple $\cat{D}$-subset of
\[
\mathrm{Hom}_{\cat{D}}(a,-)\times \mathrm{Hom}_{\cat{D}}(a,-).
\]
Hence,
\[
\mathrm{Hom}_{\cat{D}}(a,-)\times \mathrm{Hom}_{\cat{D}}(a,-)
\cong
\mathrm{Hom}_{\cat{D}}(a,-).
\]

Therefore,
\[
B^S(D)\cong \mathbb{Z}[x]/\langle x^2-x \rangle .
\]
\end{exam}
\section*{Open Questions}

This article leaves several questions open for future research. One natural problem is to describe the connected components of the spectrum of $B^S(\cat{C})$ and to determine when two prime ideals $\mathfrak{p}_{S,p}$ and $\mathfrak{p}_{T,q}$ belong to the same connected component. Another interesting question is to understand what information about two categories can be deduced from the fact that they have isomorphic simple Burnside rings or the same marks. In the classical case of Burnside rings of finite groups, questions of this type have already been studied; for instance, Raggi-Cárdenas and Valero investigated groups with isomorphic Burnside rings in \cite{raggi2005groups}.

We would like to thank Peter Webb, who visited the CCM-UNAM in Morelia and posed several interesting questions suggesting possible directions for future research. Among them are the study of what happens when one replaces simple functors from a category $\cat{C}$ to the category of finite sets by simple functors taking values in a larger category containing finite sets, as well as the problem of computing the simple Burnside ring for more general categories $\cat{C}$.
\section*{Funding declaration}

J. Miguel Calder\'on was financially supported by SECIHTI through a postdoctoral fellowship. Alberto G. Raggi-C\'ardenas did not receive financial support. Itzel Rosas was financially supported by SECIHTI grant 4022250. Ram\'on H. Ruiz-Medina was financially supported by SECIHTI, postdoctoral fellowship I1200/111/2024.

\bibliographystyle{plain}
\bibliography{bibliography}

\end{document}